\numberwithin{equation}{section}
\theoremstyle{plain}
\newtheorem{theorem}{Theorem}
\newtheorem{theorem'}{Theorem 4'}
\newtheorem{lemma}{Lemma}
\newtheorem{corollary}{Corollary}
\newtheorem{proposition}{Proposition}
\newtheorem{conjecture}{Conjecture}
\newtheorem{definition}{Definition}
\theoremstyle{definition}
\patchcmd{\@settitle}{\uppercasenonmath\@title}{}{}{}
\patchcmd{\@setauthors}{\MakeUppercase}{}{}{}
\patchcmd{\section}{\scshape}{}{}{}
\begin{document}

\title
[{Note on a theorem of Birch--Erd\H os and $m$-ary partitions}]
{Note on a theorem of Birch--Erd\H os and $m$-ary partitions\\
\bigskip
{\it \quad \quad\quad\quad \quad\quad\quad Dedicated to Prof. Yong-Gao Chen\\ \quad \quad\quad\quad \quad\quad\quad\quad \quad\quad \quad\quad\quad \quad\quad\quad\quad\quad \quad\quad\quad on the occasion of his retirement}}

\author
[Y. Ding, \quad  H. Liu \quad {\it and} \quad Z. Wang] {Yuchen Ding,\quad  Honghu Liu \quad {\it and} \quad Zi Wang}

\address{(Yuchen Ding) School of Mathematical Science,  Yangzhou University, Yangzhou 225002, People's Republic of China}
\email{ycding@yzu.edu.cn}

\address{(Honghu Liu) School of Mathematical Science,  Yangzhou University, Yangzhou 225002, People's Republic of China}
\email{3248952439@qq.com}

\address{(Zi Wang) Department of Mathematical Science,  HongKong University of science and technology, HongKong}
\email{zwanghk@connect.ust.hk}

\keywords{complete sequences, Birch--Erd\H{o}s theorem, $m$-ary partitions}
\subjclass[2010]{11A41}

\begin{abstract}
Let $p,q>1$ be two relatively prime integers and $\mathbb{N}$ the set of nonnegative integers. Let $f_{p,q}(n)$ be the number of different expressions of $n$ written as a sum of distinct terms taken from $\{p^{\alpha}q^{\beta}:\alpha,\beta\in \mathbb{N}\}$. Erd\H os conjectured and then Birch proved that $f_{p,q}(n)\ge 1$ provided that $n$ is sufficiently large. In this note, for all sufficiently large number $n$ we prove
$$
f_{p,q}(n)=2^{\frac{(\log n)^2}{2\log p\log q}\big(1+O(\log\log n/\log n)\big)}.
$$
We also show that $\lim_{n\rightarrow\infty}f_{2,q}(n+1)/f_{2,q}(n)=1.$
Additionally, 
we will point out the relations between $f_{2,q}(n)$ and $m$-ary partitions.
\end{abstract}
\maketitle

\section{Introduction}\label{section1}
Let $q>p>1$ be two relatively prime integers and $\mathbb{N}$ the set of nonnegative integers. Let also $f_{p,q}(n)$ be the number of different expressions of $n$ written as a sum of distinct terms taken from $\{p^{\alpha}q^{\beta}:\alpha,\beta\in \mathbb{N}\}$.
Erd\H os conjectured and then Birch \cite{Birch} proved $f_{p,q}(n)\ge 1$ providing that $n\ge n_{p,q}$, where $n_{p,q}$ is a given integer depending only on $p$ and $q$. Davenport observed from Birch's argument that $\beta$ could be bounded in terms of $p$ and $q$. Hegyv\'ari \cite{Hegyvari} give an explicit form of the bound of $\beta$ which was later improved by Bergelson and Simmons \cite{Bergelson}, Fang and Chen \cite{Fang}. Fang and Chen \cite{Fang} also obtained an upper bound of $n_{p,q}$.
For remarks on the Birch--Erd\H os theorem, one can refer to Erd\H os' article \cite{Erdos} (see also Bloom's problem list \# 246 \cite{Bloom} of Erd\H os). 

In this note, we give some further considerations of the Birch--Erd\H{o}s theorem.
Our first theorem is the following $\log$-asymptotic formula of $f_{p,q}(n)$.
\begin{theorem}\label{thm1}
Let $f_{p,q}(n)$ be defined as above. Then we have
$$
f_{p,q}(n)=2^{\frac{(\log n)^2}{2\log p\log q}\big(1+O(\log\log n/\log n)\big)}.
$$
\end{theorem}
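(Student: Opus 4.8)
The plan is to squeeze $\log_2 f_{p,q}(n)$ between matching bounds whose common main value is the lattice-point count $N(n):=\#\{(\alpha,\beta)\in\N^2: p^{\alpha}q^{\beta}\le n\}$. Since $\gcd(p,q)=1$, unique factorization makes $(\alpha,\beta)\mapsto p^{\alpha}q^{\beta}$ injective, so $f_{p,q}(n)$ counts subsets $T$ of the lattice points $(\alpha,\beta)$ with $\sum_{(\alpha,\beta)\in T}p^{\alpha}q^{\beta}=n$, and every summand is at most $n$. Hence $f_{p,q}(n)\le 2^{N(n)}$, and a routine count of the lattice points in the triangle $\{\alpha\log p+\beta\log q\le\log n\}$ gives $N(n)=\frac{(\log n)^2}{2\log p\log q}+O(\log n)$, whose relative error $O(1/\log n)$ sits comfortably inside the claimed $O(\log\log n/\log n)$. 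This disposes of the upper bound; the whole difficulty lies in the matching lower bound.

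For the lower bound I would run a tilting (maximum-entropy) argument, which I expect to reproduce the precise shape of the error term. Write $S_{\le n}=\{s\in S: s\le n\}$, set $\theta_s=(e^{ts}+1)^{-1}$ for a parameter $t>0$, and consider the product measure $\mathbb P(T)=\prod_{s\in T}\theta_s\prod_{s\notin T}(1-\theta_s)$ on subsets $T\subseteq S_{\le n}$. Choose $t=t(n)$ so that $\E\big[\sum_{s\in T}s\big]=\sum_s s\,\theta_s=n$. Because any two subsets with equal sum receive equal weight, one obtains the exact identity
\[
f_{p,q}(n)=e^{tn}\Big(\prod_{s\in S_{\le n}}(1+e^{-ts})\Big)\,\mathbb P\Big(\textstyle\sum_{s\in T}s=n\Big),
\]
and the Gibbs computation $\log\prod_s(1+e^{-ts})+tn=\sum_s H(\theta_s)$, with $H(\theta)=-\theta\log\theta-(1-\theta)\log(1-\theta)$, turns this into
\[
\log f_{p,q}(n)=\sum_{s\in S_{\le n}}H(\theta_s)+\log\mathbb P\Big(\textstyle\sum_{s\in T}s=n\Big).
\]

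The first sum is the main term. Analysing the saddle equation $\sum_s s\,\theta_s=n$ through the density $dN(x)=\frac{\log x}{x\log p\log q}\,dx$ shows $t\asymp(\log n)/n$, so $\theta_s\approx\tfrac12$ (hence $H(\theta_s)\approx\log 2$) for $s\lesssim 1/t\asymp n/\log n$ and $H(\theta_s)$ decays for larger $s$. Counting lattice points once more, $\sum_s H(\theta_s)=(\log 2)\,N(n/\log n)\,(1+o(1))=(\log 2)\,N(n)\big(1-O(\log\log n/\log n)\big)$, the truncation at $s\approx n/\log n$ being exactly what produces the factor $1-O(\log\log n/\log n)$ since $N(n/\log n)/N(n)=(1-\log\log n/\log n)^2$.

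It remains to show the correction $\log\mathbb P(\sum s=n)$ is only $O(\log n)$, i.e.\ a polynomial lower bound $\mathbb P(\sum_{s\in T}s=n)\ge n^{-O(1)}$; this is the main obstacle, and it amounts to a local limit theorem for the highly non-identically-distributed lattice sum $\sum_s s\,\mathbf{1}[s\in T]$ at its own mean. I would establish it by Fourier inversion, $\mathbb P(\sum s=n)=\frac1{2\pi}\int_{-\pi}^{\pi}e^{-i\phi n}\prod_s\big(1-\theta_s+\theta_s e^{i\phi s}\big)\,d\phi$, using $|1-\theta_s+\theta_s e^{i\phi s}|^2=1-2\theta_s(1-\theta_s)(1-\cos\phi s)$. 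Near $\phi=0$ a Gaussian estimate gives the expected contribution of order $1/\sigma$ with $\sigma^2=\sum_s s^2\theta_s(1-\theta_s)=n^{1+o(1)}$, hence $\log(1/\sigma)=O(\log n)$. The delicate part is the range $|\phi|\gg1/\sigma$, where one must show $\sum_s\theta_s(1-\theta_s)(1-\cos\phi s)\gg\log n$ uniformly: here the coprimality of $p$ and $q$ is essential, for $1=p^0q^0\in S_{\le n}$ forces the parts to generate $\Z$, and an equidistribution argument for $\{\phi\,p^{\alpha}q^{\beta}\bmod 2\pi\}$ over the many $s$ with $\theta_s(1-\theta_s)\asymp1$ excludes secondary peaks of the characteristic function. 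Combining the two bounds yields $\log_2 f_{p,q}(n)=N(n)\big(1+O(\log\log n/\log n)\big)$, which is the assertion.
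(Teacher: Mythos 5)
Your upper bound is exactly the paper's: count the lattice points $N(n)=\#\{(\alpha,\beta):p^{\alpha}q^{\beta}\le n\}=\frac{(\log n)^2}{2\log p\log q}+O(\log n)$ and use $f_{p,q}(n)\le 2^{N(n)}$. The problem is the lower bound. Your tilted-measure identity and the Gibbs computation of the main term $\sum_s H(\theta_s)=(\log 2)N(n)\bigl(1+O(\log\log n/\log n)\bigr)$ are fine as far as they go, but the entire content of the theorem is then compressed into the unproven assertion $\mathbb{P}\bigl(\sum_{s\in T}s=n\bigr)\ge n^{-O(1)}$, and this is a genuine gap, not a routine local limit theorem. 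First, even the \emph{positivity} of this probability is the Birch--Erd\H{o}s theorem itself, which required a real argument and does not fall out of a Fourier computation. Second, the set $S_{\le n}$ has only $O((\log n)^2)$ elements, so your minor-arc requirement $\sum_s\theta_s(1-\theta_s)(1-\cos\phi s)\gg\log n$ uniformly for $1/\sigma\ll|\phi|\le\pi$ asks that, for \emph{every} such $\phi$, a positive proportion of the roughly $(\log n)^2/\log n$-thin family $\{p^{\alpha}q^{\beta}\}$ have $\phi p^{\alpha}q^{\beta}$ bounded away from $0\pmod{2\pi}$. That is a quantitative equidistribution statement for the multiplicative semigroup generated by $p$ and $q$, uniform in the frequency $\phi$; such statements are tied to notoriously hard rigidity problems of Furstenberg $\times p\times q$ type and cannot be dispatched by ``an equidistribution argument'' in one sentence. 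As written, the proposal proves the upper bound and nothing more.

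For contrast, the paper sidesteps the anti-concentration issue entirely. It only varies the representation over subsets $S$ of the \emph{small} values $p^{\alpha}q^{\beta}\le n/(\log n)^4$ (there are $N(n)\bigl(1-O(\log\log n/\log n)\bigr)$ of them), and then completes the remainder $\widetilde n=n-\sum_{S}p^{\alpha}q^{\beta}>n/2$ to a full representation of $n$ using Yu's quantitative Birch-type theorem (Lemma 2.1 in the paper): $\widetilde n$ is a sum of distinct $p^{\alpha}q^{\beta}$ all exceeding $c\widetilde n/(\log\widetilde n)^{3/2}$, hence disjoint from the small parts, so distinct subsets $S$ yield distinct representations and $f_{p,q}(n)\ge 2^{|\widetilde{\mathscr{E}}|}$. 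Some external input of Birch--Erd\H{o}s type appears unavoidable for the lower bound; your scheme supplies none. If you want to salvage your approach, you would still need to import such a completion lemma to replace the local limit theorem, at which point you have essentially reproduced the paper's argument with extra machinery.
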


In a former draft, we proved the following slightly weak one
$$
f_{p,q}(n)=2^{\left(\frac{1}{2}+o(1)\right)\frac{(\log n)^2}{\log p\log q}}.
$$
The present more explicit form of Theorem \ref{thm1} was obtained following the suggestion of Prof. Yong-Gao Chen.
From Theorem \ref{thm1} we have the following three corollaries.

\begin{corollary}\label{cor1}
Let $q>p>1$ be two relatively prime integers. Then 
$$
\log f_{p,q}(n)\sim \frac{\log 2(\log n)^2}{2\log p\log q}, \quad \text{as~}n\rightarrow\infty.
$$
\end{corollary}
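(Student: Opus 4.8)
The plan is to obtain Corollary \ref{cor1} as an immediate consequence of Theorem \ref{thm1}; no independent argument is required. First I would take the logarithm of both sides of the formula in Theorem \ref{thm1}. Writing the right-hand side as $2^{E(n)}$ with
$$
E(n)=\frac{(\log n)^2}{2\log p\log q}\big(1+O(\log\log n/\log n)\big),
$$
one has $\log f_{p,q}(n)=E(n)\log 2$, so that
$$
\log f_{p,q}(n)=\frac{\log 2\,(\log n)^2}{2\log p\log q}\big(1+O(\log\log n/\log n)\big).
$$

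The remaining step is to observe that the error factor is negligible in the limit: since $\log\log n/\log n\to 0$ as $n\to\infty$, the bracketed quantity $1+O(\log\log n/\log n)$ tends to $1$. Dividing $\log f_{p,q}(n)$ by the candidate main term $\tfrac{\log 2\,(\log n)^2}{2\log p\log q}$ therefore produces a ratio converging to $1$, which is exactly the assertion $\log f_{p,q}(n)\sim \tfrac{\log 2\,(\log n)^2}{2\log p\log q}$.

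Because the derivation is purely formal, there is no real obstacle here: all of the analytic content has already been absorbed into the proof of Theorem \ref{thm1}. The only point deserving a word of care is to make explicit that a multiplicative error of the shape $1+o(1)$ is precisely what the relation $f\sim g$ encodes; the sharper $O(\log\log n/\log n)$ bound supplied by Theorem \ref{thm1} is strictly stronger than what the corollary records, and the corollary simply retains the leading-order behavior.
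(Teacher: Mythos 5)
Your derivation is correct and is exactly how the paper treats this corollary: the paper states it as an immediate consequence of Theorem \ref{thm1} with no separate argument, and taking logarithms and noting that $1+O(\log\log n/\log n)\to 1$ is precisely the intended reasoning. Nothing further is needed.
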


\begin{corollary}\label{cor2}
Let $q_2>q_1>p>1$ be integers with $\gcd(p,q_1)=\gcd(p,q_2)=1$. Then 
$$
\lim_{n\rightarrow\infty}f_{p,q_2}(n)/f_{p,q_1}(n)=0.
$$
In particular, we have
$$
f_{p,q_1}(n)>f_{p,q_2}(n)
$$
for all sufficiently large $n$. 
\end{corollary}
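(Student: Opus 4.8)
The plan is to deduce everything from Theorem \ref{thm1} (equivalently, Corollary \ref{cor1}) by passing to logarithms. Taking logarithms in the asymptotic formula gives, for each admissible modulus $q$,
$$
\log f_{p,q}(n)=\frac{\log 2\,(\log n)^2}{2\log p\log q}\Big(1+O\big(\log\log n/\log n\big)\Big)=\frac{\log 2\,(\log n)^2}{2\log p\log q}+O\big((\log n)\log\log n\big).
$$
First I would apply this once with $q=q_1$ and once with $q=q_2$ and subtract, so that
$$
\log\frac{f_{p,q_2}(n)}{f_{p,q_1}(n)}=\log f_{p,q_2}(n)-\log f_{p,q_1}(n).
$$

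The decisive step is to read off the sign of the surviving main term. Upon subtracting, the two leading terms combine into
$$
\frac{\log 2\,(\log n)^2}{2\log p}\Big(\frac{1}{\log q_2}-\frac{1}{\log q_1}\Big),
$$
while the two error terms are absorbed into a single $O\big((\log n)\log\log n\big)$. Since $q_2>q_1>1$ forces $\log q_2>\log q_1>0$, the factor $\tfrac{1}{\log q_2}-\tfrac{1}{\log q_1}$ is a fixed strictly negative constant, so the main term equals $-c\,(\log n)^2$ for some $c=c(p,q_1,q_2)>0$. As this swamps the error term $O\big((\log n)\log\log n\big)=o\big((\log n)^2\big)$, we obtain
$$
\log\frac{f_{p,q_2}(n)}{f_{p,q_1}(n)}=-c\,(\log n)^2+O\big((\log n)\log\log n\big)\longrightarrow-\infty,
$$
and exponentiating gives $f_{p,q_2}(n)/f_{p,q_1}(n)\to 0$. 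The ``in particular'' assertion is then immediate, since a sequence converging to $0$ is eventually smaller than $1$, whence $f_{p,q_1}(n)>f_{p,q_2}(n)$ for all sufficiently large $n$.

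I do not expect a genuine obstacle here once Theorem \ref{thm1} is available; the only point demanding a little care is the bookkeeping of the error terms. One should check that although the implied constant in the $O$-term of Theorem \ref{thm1} may depend on $p$ and $q$, it still yields an error of order $(\log n)\log\log n$, which is of strictly smaller order than the $(\log n)^2$ gap between the two main terms. The essential mechanism is that these main terms do \emph{not} cancel: their coefficients $1/\log q_2$ and $1/\log q_1$ are distinct precisely because $q_1\ne q_2$, so the surviving quadratic-in-$\log n$ term dominates and drives the ratio to $0$. In fact the weaker Corollary \ref{cor1} already suffices, since the bracketed factor there tends to the strictly negative constant $\tfrac{1}{\log q_2}-\tfrac{1}{\log q_1}$.
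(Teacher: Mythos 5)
Your proposal is correct and is essentially the argument the paper intends: the corollary is stated as an immediate consequence of Theorem \ref{thm1}, and your computation (subtracting the two logarithmic asymptotics, observing that the surviving main term $\frac{\log 2\,(\log n)^2}{2\log p}\big(\frac{1}{\log q_2}-\frac{1}{\log q_1}\big)$ is a negative multiple of $(\log n)^2$ and dominates the $O\big((\log n)\log\log n\big)$ error) is the natural and correct way to make that deduction explicit. The only minor remark is that, as you note at the end, Corollary \ref{cor1} alone already suffices, so no extra care with the error terms is actually needed beyond what you wrote.
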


\begin{corollary}\label{cor3}
Let $q>p>1$ be two relatively prime integers. Then 
$$
f_{p,q}(n+1)>f_{p,q}(n)
$$
for infinitely many $n$.
\end{corollary}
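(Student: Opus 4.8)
\textbf{Proof proposal for Corollary \ref{cor3}.}
The plan is to argue by contradiction, leveraging the growth rate supplied by Theorem \ref{thm1} (equivalently Corollary \ref{cor1}). First I would observe that the assertion fails precisely when there exists an index $N$ such that $f_{p,q}(n+1)\le f_{p,q}(n)$ for every $n\ge N$; that is, when the sequence $\big(f_{p,q}(n)\big)_{n\ge N}$ is eventually non-increasing. So I assume this for contradiction.

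Next I would exploit the integrality of the sequence. By the Birch--Erd\H{o}s theorem, $f_{p,q}(n)\ge 1$ for all sufficiently large $n$, so after enlarging $N$ if necessary the terms $f_{p,q}(n)$ for $n\ge N$ are all positive integers. A non-increasing sequence of positive integers is bounded above by its first term, hence here
$$
f_{p,q}(n)\le f_{p,q}(N)\qquad\text{for all } n\ge N.
$$
(In fact such a sequence must be eventually constant, but boundedness alone suffices.)

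Finally I would derive the contradiction from the divergence furnished by Corollary \ref{cor1}: since
$$
\log f_{p,q}(n)\sim \frac{\log 2\,(\log n)^2}{2\log p\log q}\longrightarrow\infty\quad(n\to\infty),
$$
we have $f_{p,q}(n)\to\infty$, which is incompatible with the uniform bound $f_{p,q}(n)\le f_{p,q}(N)$. This contradiction shows that no such $N$ exists, so $f_{p,q}(n+1)>f_{p,q}(n)$ for infinitely many $n$, as claimed.

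I do not expect any genuine obstacle here: the entire content is packaged into Theorem \ref{thm1}, and the remaining step is the elementary principle that an eventually monotone non-increasing sequence of positive integers cannot tend to infinity. The only point warranting a moment's care is to invoke $f_{p,q}(n)\ge 1$ (Birch--Erd\H{o}s) so that the terms are legitimately \emph{positive integers}, which is what turns the eventual monotonicity into boundedness.
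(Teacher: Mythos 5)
Your proof is correct and matches the paper's (implicit) argument: the paper derives Corollary \ref{cor3} directly from Theorem \ref{thm1}, the point being simply that $f_{p,q}(n)\to\infty$ forces infinitely many strict increases. (A minor remark: the appeal to integrality and to Birch--Erd\H{o}s is unnecessary, since an eventually non-increasing sequence is bounded above by its first term in any case.)
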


Neglecting error terms of the approximate formula of $f_{p,q}(n)$, we consider the function 
$$
\widetilde{f_{p,q}}(n)=2^{\frac{(\log n)^2}{2\log p\log q}}.
$$ 
Let $\ell\ge 2$ be a given positive integer. One notes easily that
\begin{align*}
\widetilde{f_{p,q}}(\ell n)=2^{\frac{(\log \ell n)^2}{2\log p\log q}}\ll_{p,q,\ell}2^{\frac{(\log n)^2}{2\log p\log q}}2^{\frac{\log \ell\log n}{\log p\log q}}=n^{\frac{\log 2\log \ell}{\log p\log q}}\widetilde{f_{p,q}}(n),
\end{align*}
from which we make the following bold conjecture.

\begin{conjecture}\label{conjecture1}
Let $q>p>1$ be two relatively prime integers and $\ell\ge 2$ a given integer. Then for any $\varepsilon>0$ we have
$$
f_{p,q}(\ell n)\ll n^{\frac{\log 2\log \ell}{\log p\log q}+\varepsilon}f_{p,q}(n),
$$
where the implied constant depends only on $p,q,\ell,$ and $\varepsilon$.
\end{conjecture}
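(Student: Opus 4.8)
The plan is to abandon the crude route through Theorem~\ref{thm1} (whose error term $2^{O(\log n\log\log n)}$ is hopelessly lossy for a ratio) and instead run a genuine saddle-point analysis of the generating function
$$
F(x)=\prod_{\alpha,\beta\ge 0}\bigl(1+x^{p^\alpha q^\beta}\bigr),\qquad f_{p,q}(n)=[x^n]F(x),
$$
performed \emph{simultaneously} at heights $n$ and $\ell n$, so that $f_{p,q}(\ell n)/f_{p,q}(n)$ is read off directly and the troublesome secondary terms are allowed to cancel. Writing $\Phi(s)=\log F(e^{-s})=\sum_{\alpha,\beta\ge0}\log(1+e^{-sp^\alpha q^\beta})$ and $L=\log(1/s)$, a Mellin computation (pairing $\int_0^\infty\log(1+e^{-u})u^{z-1}\,du=\Gamma(z)\eta(z+1)$ with the Dirichlet series $\sum_{\alpha,\beta}(p^\alpha q^\beta)^{-z}=\{(1-p^{-z})(1-q^{-z})\}^{-1}$) produces a triple pole at $z=0$, giving
$$
\Phi(s)=\frac{\log 2}{2\log p\log q}\,L^2+c_1L+c_0+\Psi(L)+o(1),
$$
where $c_1,c_0$ are explicit constants and $\Psi$ is a bounded quasi-periodic term coming from the imaginary-axis poles $z\in\frac{2\pi i}{\log p}\Z$ and $z\in\frac{2\pi i}{\log q}\Z$ (disjoint and simple since $\log p/\log q\notin\Q$), whose amplitudes decay through the factor $\Gamma(z)$.

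First I would obtain the upper bound on the numerator from Rankin's inequality $f_{p,q}(\ell n)\le e^{\ell n s}F(e^{-s})=\exp(\ell n s+\Phi(s))$, optimised at the saddle $s_{\ell n}$ solving $-\Phi'(s)=\ell n$. Since $-\Phi'(s)=\tfrac1s(\tfrac{\log2}{\log p\log q}L+c_1+\cdots)$, one finds $s_m\asymp(\log m)/m$ and hence $L_m=\log m-\log\log m+O(1)$; substituting yields
$$
\log f_{p,q}(m)\le \frac{\log 2}{2\log p\log q}(\log m)^2-\frac{\log 2}{\log p\log q}\log m\log\log m+O(\log m).
$$
The decisive observation is that the genuinely present secondary term $-\tfrac{\log2}{\log p\log q}\log m\log\log m$ is a \emph{smooth} function of $m$, so on differencing $m=\ell n$ against $m=n$ it contributes only $O(\log\log n)$, while $\Psi,c_0$ contribute $O(1)$. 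Thus formally
$$
\log f_{p,q}(\ell n)-\log f_{p,q}(n)=\frac{\log 2\,\log \ell}{\log p\log q}\log n+O(\log\log n),
$$
which exponentiates to $n^{\frac{\log2\log\ell}{\log p\log q}+o(1)}$, comfortably inside the claimed $n^{\frac{\log2\log\ell}{\log p\log q}+\varepsilon}$ (indeed the $\varepsilon$ is far from sharp).

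The matching lower bound for $f_{p,q}(n)$ — that the coefficient actually attains the saddle value up to a polynomial factor — is where the real work lies and is the main obstacle. Here I would run the circle method: on the major arc $|\theta|\le\delta$ a Gaussian approximation gives $f_{p,q}(n)\gg e^{\Phi(s_n)+ns_n}/\sqrt{\Phi''(s_n)}$ with $\Phi''(s_n)\asymp n^2/\log n$, costing a further $O(\log n)$ in the exponent and hence only $O(\log\log n)$ in the ratio. The delicate input is the minor-arc estimate: one must show $|F(e^{-s_n+i\theta})|$ is smaller than $F(e^{-s_n})$ by more than a power of $\log n$ for $\delta\le|\theta|\le\pi$, i.e. that $\sum_{\alpha,\beta}\log\bigl|1+e^{-(s_n-i\theta)p^\alpha q^\beta}\bigr|$ genuinely drops. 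Because the part-set $\{p^\alpha q^\beta\}$ is extremely sparse and multiplicatively structured, the equidistribution arguments standard for ordinary partitions do not transfer, and quantifying that the phases $\theta p^\alpha q^\beta$ cannot all cluster near $2\pi\Z$ — strongly enough to beat the $\sqrt{\Phi''}$ loss — is the crux, presumably needing a Diophantine/irrationality-measure input on $\log p/\log q$. Once this local central limit estimate is secured, combining it with the Rankin upper bound delivers two-sided control of the ratio and the conjecture; I note finally that the bound is multiplicative in $\ell$ (a proof for $\ell_1$ and for $\ell_2$ yields $\ell_1\ell_2$, the exponents adding), so it would suffice to treat prime $\ell$.
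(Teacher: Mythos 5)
This statement is Conjecture \ref{conjecture1} of the paper: it is left \emph{open} there, with only the remark that it would follow if the error term $O(\log\log n/\log n)$ in Theorem \ref{thm1} were sharpened to $o(1/\log n)$. So there is no proof in the paper to compare against, and the question is whether your proposal actually closes the gap. It does not. Your Rankin/saddle-point upper bound for $f_{p,q}(\ell n)$ is fine (and indeed sharper than the paper's combinatorial upper bound, since it captures the negative $\log m\log\log m$ secondary term), but the conjecture is a statement about a \emph{ratio}, and everything hinges on a matching lower bound $\log f_{p,q}(n)\ge \ell n s_n+\Phi(s_n)-O(\varepsilon\log n)$, i.e.\ on the local central limit theorem you defer to the circle method. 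You yourself identify the minor-arc estimate as ``the crux'' and offer no argument for it, only the hope that a Diophantine input on $\log p/\log q$ would suffice. That is precisely the missing ingredient: for the extremely sparse, multiplicatively structured part-set $\{p^\alpha q^\beta\}$ no such minor-arc bound (nor any local limit theorem) is known — even the paper's best lower bound, via Yu's entirely different combinatorial lemma, loses a factor $2^{O(\log n\log\log n)}$, which is exactly why the statement remains a conjecture. A proof outline whose decisive step is explicitly unproved is a program, not a proof.

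Two smaller points. First, the claimed cancellation of secondary terms ``on differencing'' presupposes that $\log f_{p,q}(m)$ actually \emph{equals} the saddle-point prediction up to $O(\log m)$ for both $m=n$ and $m=\ell n$; with only a one-sided (Rankin) bound in hand, nothing cancels — the upper bound at $\ell n$ and the (unavailable) lower bound at $n$ are what you need, and the quasi-periodic term $\Psi(L)$ and the smooth secondary term are red herrings until the two-sided asymptotic exists. Second, your closing observation that the conjecture is multiplicative in $\ell$ (so prime $\ell$ suffices) is correct and is a genuine, if modest, structural remark: if the bound holds for $\ell_1$ and $\ell_2$ then applying it twice gives it for $\ell_1\ell_2$ with exponents $\frac{\log 2\log\ell_1}{\log p\log q}+\frac{\log 2\log\ell_2}{\log p\log q}=\frac{\log 2\log(\ell_1\ell_2)}{\log p\log q}$, at the harmless cost of a constant depending on $\ell_2$. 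But this reduction does not touch the core difficulty.
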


Conjecture \ref{conjecture1} can be proved if the $O(\log\log n/\log n)$ term in Theorem \ref{thm1}  is improved to $o(1/\log n)$.

Corollary \ref{cor3} leads us to a natural problem. Is it true that 
$$
f_{p,q}(n+1)<f_{p,q}(n)
$$
for infinitely many $n$? It seems that the answers to this problem are quite different according to whether $p=2$ or not.
For $p=2$, we have the following results.

\begin{theorem}\label{thm2}
Let $q>2$ be an odd integer. Then for any $n\ge 1$, we have 
\begin{align*}
f_{2,q}(n+1)=
\begin{cases}
f_{2,q}(n), &~\text{if~}q\nmid n+1;\\
f_{2,q}(n)+f_{2,q}\big(\frac{n+1}{q}\big), &~\text{otherwise.}
\end{cases}
\end{align*}
\end{theorem}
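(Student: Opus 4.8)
The plan is to reduce the statement to the classical recursion for $q$-ary partitions, by first identifying $f_{2,q}(n)$ with the number of partitions of $n$ into (not necessarily distinct) powers of $q$. Since $\gcd(2,q)=1$, unique factorization shows that distinct pairs $(\alpha,\beta)$ give distinct values $2^{\alpha}q^{\beta}$, so the number of subsets summing to $n$ is encoded by
$$
F(x)=\prod_{\alpha\ge 0}\prod_{\beta\ge 0}\bigl(1+x^{2^{\alpha}q^{\beta}}\bigr)=\sum_{n\ge 0}f_{2,q}(n)x^{n}.
$$
For each fixed $\beta$, the uniqueness of binary expansions gives $\prod_{\alpha\ge 0}(1+x^{2^{\alpha}q^{\beta}})=(1-x^{q^{\beta}})^{-1}$, so that
$$
F(x)=\prod_{\beta\ge 0}\frac{1}{1-x^{q^{\beta}}}.
$$
The right-hand side is exactly the generating function for the number $b_{q}(n)$ of $q$-ary partitions of $n$, i.e.\ of representations $n=\sum_{\beta\ge 0}c_{\beta}q^{\beta}$ with integers $c_{\beta}\ge 0$; hence $f_{2,q}(n)=b_{q}(n)$. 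This is the promised link with $m$-ary partitions, and it is the only genuinely novel step --- everything afterwards is a standard manipulation.

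Next I would peel off the $\beta=0$ factor to obtain a functional equation. Since $\prod_{\beta\ge 1}(1-x^{q^{\beta}})^{-1}=F(x^{q})$, the product above yields $(1-x)F(x)=F(x^{q})$. Expanding the left-hand side as $\sum_{N\ge 0}\bigl(f_{2,q}(N)-f_{2,q}(N-1)\bigr)x^{N}$ (with the convention $f_{2,q}(-1)=0$) and the right-hand side as $\sum_{m\ge 0}f_{2,q}(m)x^{qm}$, I would compare the coefficient of $x^{N}$. When $q\nmid N$ this gives $f_{2,q}(N)=f_{2,q}(N-1)$, and when $q\mid N$ it gives $f_{2,q}(N)=f_{2,q}(N-1)+f_{2,q}(N/q)$. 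Specialising to $N=n+1$ is precisely the two-case recursion claimed in the theorem.

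For readers preferring a combinatorial argument, the same recursion has a transparent bijective proof on $q$-ary partitions. In any partition of $n+1$, the number $c_{0}$ of parts equal to $1$ satisfies $c_{0}\equiv n+1\pmod q$. If $q\nmid n+1$ then $c_{0}\ge 1$ always, and removing one part equal to $1$ is a bijection onto the partitions of $n$, giving the first case. If $q\mid n+1$, the partitions with $c_{0}\ge 1$ biject with the partitions of $n$ by the same deletion, while the partitions with $c_{0}=0$ use only parts divisible by $q$ and, after dividing every part by $q$, biject with the partitions of $(n+1)/q$; adding the two contributions gives the second case.

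Since both routes are elementary, I do not anticipate a serious technical obstacle. The one point I would take care over is the justification that the product over $\alpha$ collapses to $(1-x^{q^{\beta}})^{-1}$ and that the resulting product genuinely enumerates $q$-ary partitions; this identification of $f_{2,q}$ with $b_{q}$ is what carries the entire proof, after which the recursion follows from the single functional equation $(1-x)F(x)=F(x^{q})$.
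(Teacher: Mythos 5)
Your proof is correct, but it follows a genuinely different route from the one the paper presents in Section \ref{section3}. You first identify $f_{2,q}(n)$ with the $q$-ary partition function $b_q(n)$ via the collapse $\prod_{\alpha\ge 0}(1+x^{2^{\alpha}m})=(1-x^{m})^{-1}$, and then read the recursion off the functional equation $(1-x)F(x)=F(x^{q})$ (or, equivalently, off the bijective argument on $q$-ary partitions you sketch at the end). The paper proves the very same identification $f_{2,q}(n)=b_q(n)$ as Proposition \ref{pro1}, by a direct bijection rather than generating functions, and explicitly remarks that combining it with the known recurrences for $b_q$ going back to Mahler and to Rodseth--Sellers ``would lead to Theorem \ref{thm2}''; so your argument is essentially the classical route that the authors acknowledge but deliberately set aside. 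Their own proof works entirely inside the original setting of representations of $n$ as sums of distinct parts $2^{\alpha}q^{\beta}$: Lemma \ref{lemma2} trades the single part $2^{t+1}$ in a representation of $n+1$ for the block $1+2+\cdots+2^{t}$ in a representation of $n$, Lemma \ref{lemma3} iterates this exchange to get $f(n+1)-f(n)=f_{\overline{\{1,2,\dots,2^{t}\}}}(n+1)-f_{\{1,2,\dots,2^{t}\}}(n)$, and choosing $t$ with $2^{t+1}-1>n$ kills the subtracted term and forces every remaining representation of $n+1$ to use only parts divisible by $q$, which after division by $q$ yields $f\big(\frac{n+1}{q}\big)$ or $0$ according as $q$ divides $n+1$ or not. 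What your route buys is brevity and an immediate bridge to the $m$-ary partition literature; what the paper's route buys is a self-contained combinatorial proof that never leaves the ``distinct parts $2^{\alpha}q^{\beta}$'' framework, which is precisely the novelty the authors emphasize. Both arguments are sound.
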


The following Corollaries \ref{cor4}, \ref{cor5}, \ref{cor6}, and \ref{cor7} can be deduced from Theorem \ref{thm2} immediately.
\begin{corollary}\label{cor4}
Let $q>2$ be an odd integer. Then for any $n\ge 1$, we have  
$$
f_{2,q}(n+1)\ge f_{2,q}(n).
$$
\end{corollary}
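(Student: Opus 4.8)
The plan is to deduce this inequality directly from the exact recurrence in Theorem \ref{thm2}, so the entire content of the argument is a nonnegativity observation together with the case split already supplied there. First I would record the basic fact that, by its very definition, $f_{2,q}(m)$ counts the number of distinct representations of $m$ as a sum of distinct elements of $\{2^{\alpha}q^{\beta}:\alpha,\beta\in\mathbb{N}\}$; hence $f_{2,q}(m)$ is a nonnegative integer for every $m\in\mathbb{N}$. This is the only external input needed beyond Theorem \ref{thm2}.

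Next I would split according to the two cases of Theorem \ref{thm2}. If $q\nmid n+1$, then the theorem gives $f_{2,q}(n+1)=f_{2,q}(n)$, so the desired inequality holds with equality. If instead $q\mid n+1$, then $(n+1)/q$ is a nonnegative integer, so $f_{2,q}\big(\tfrac{n+1}{q}\big)$ is well defined and, by the nonnegativity just noted, is $\ge 0$. The theorem then yields
$$
f_{2,q}(n+1)=f_{2,q}(n)+f_{2,q}\Big(\tfrac{n+1}{q}\Big)\ge f_{2,q}(n).
$$
Combining the two cases gives $f_{2,q}(n+1)\ge f_{2,q}(n)$ for all $n\ge 1$, as claimed.

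There is essentially no obstacle here: once Theorem \ref{thm2} is in hand, the corollary is immediate. The only point worth flagging is conceptual rather than technical, namely that Theorem \ref{thm2} expresses $f_{2,q}(n+1)$ as $f_{2,q}(n)$ plus a \emph{nonnegative} correction term, which is precisely what forces monotonicity in the ``$\ge$'' direction and, in contrast, explains why the reverse strict inequality $f_{2,q}(n+1)<f_{2,q}(n)$ can never occur for $p=2$ (in sharp distinction to the general problem raised just before Theorem \ref{thm2}). I would therefore keep the write-up to a few lines, emphasizing the nonnegativity of the counting function as the key step.
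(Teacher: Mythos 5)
Your proposal is correct and is exactly the deduction the paper intends: the authors state that Corollary \ref{cor4} follows from Theorem \ref{thm2} immediately, and your case split (equality when $q\nmid n+1$, and a nonnegative correction term $f_{2,q}\big(\frac{n+1}{q}\big)$ when $q\mid n+1$) is that immediate argument. Nothing is missing.
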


\begin{corollary}\label{cor5}
Let $q>2$ be an odd integer. Then for any $n\ge 1$, we have  
$$
\lim_{x\rightarrow\infty}\frac{\#\{n\le x:f_{2,q}(n+1)= f_{2,q}(n)\} }{x}=\frac{q-1}{q}.
$$
\end{corollary}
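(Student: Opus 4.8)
The plan is to read the corollary directly off Theorem~\ref{thm2}. By that theorem, for every $n\ge 1$ the equality $f_{2,q}(n+1)=f_{2,q}(n)$ holds in exactly two situations: either $q\nmid n+1$, in which case equality is automatic; or $q\mid n+1$ and the extra summand $f_{2,q}\big(\frac{n+1}{q}\big)$ vanishes. Thus I would first record the clean characterization
$$
f_{2,q}(n+1)=f_{2,q}(n)\quad\Longleftrightarrow\quad q\nmid n+1\ \ \text{or}\ \ \Big(q\mid n+1\ \text{and}\ f_{2,q}\big(\tfrac{n+1}{q}\big)=0\Big),
$$
and observe that the two alternatives are mutually exclusive, so the counting function splits as a sum of two disjoint counts.

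The first count, $\#\{n\le x: q\nmid n+1\}$, is a routine residue-class computation: as $n$ runs through $1,\dots,\lfloor x\rfloor$ the shifted value $n+1$ runs through an interval of length $x+O(1)$, exactly $\lfloor (\lfloor x\rfloor+1)/q\rfloor=x/q+O(1)$ of whose members are divisible by $q$, leaving $\frac{q-1}{q}x+O(1)$ values with $q\nmid n+1$. This supplies the claimed main term.

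For the second count I would invoke the Birch--Erd\H os theorem: since $f_{2,q}(m)\ge 1$ for all $m\ge n_{2,q}$, the set $\{m\ge 1: f_{2,q}(m)=0\}$ is finite, of some cardinality $C=C(q)$. Every $n$ contributing to the second count satisfies $n+1=qm$ with $f_{2,q}(m)=0$, so there are at most $C$ such $n$ in total, independent of $x$; hence this count is $O(1)$. Adding the two contributions gives
$$
\#\{n\le x: f_{2,q}(n+1)=f_{2,q}(n)\}=\frac{q-1}{q}\,x+O(1),
$$
and dividing by $x$ and letting $x\to\infty$ yields the stated density $\frac{q-1}{q}$.

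There is essentially no obstacle here once Theorem~\ref{thm2} is in hand; the corollary is an immediate consequence. The only point requiring a moment's care is the finiteness of the exceptional set where $q\mid n+1$ yet equality still holds, and this is precisely what the Birch--Erd\H os theorem guarantees --- without it one could not a priori rule out a positive-density contribution from the divisible residue class, which would spoil the value of the limit.
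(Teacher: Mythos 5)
Your proof is correct and is essentially the argument the paper has in mind (the paper offers no written proof, stating that Corollary \ref{cor5} follows immediately from Theorem \ref{thm2}). One small simplification: you do not need the Birch--Erd\H{o}s theorem to control the exceptional class, since $f_{2,q}(m)\ge 1$ for every $m\ge 1$ (the binary expansion of $m$ is already a valid representation), so the case $q\mid n+1$ with $f_{2,q}\big(\frac{n+1}{q}\big)=0$ never occurs and the set in question is exactly $\{n\le x: q\nmid n+1\}$.
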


\begin{corollary}\label{cor6}
Let $q>2$ be an odd integer. Suppose that
$
n=qm+r
$
with $0\le r<q$ and $m\ge 1$,
then we have
$$
f_{2,q}(n)=1+f_{2,q}(1)+f_{2,q}(2)+\cdot\cdot\cdot+f_{2,q}(m).
$$
\end{corollary}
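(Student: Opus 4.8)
The plan is to read the corollary directly off the recurrence in Theorem \ref{thm2}, exploiting the fact that $f_{2,q}$ is constant between consecutive multiples of $q$ and increases only when the argument is divisible by $q$. First I would record the \emph{flatness on blocks}: if $0\le r<q$, then
$$f_{2,q}(qm+r)=f_{2,q}(qm),$$
since each of $qm+1,\dots,qm+r$ fails to be divisible by $q$, so the first case of Theorem \ref{thm2} applies $r$ times in succession. In particular, taking $r=q-1$ gives $f_{2,q}(qm-1)=f_{2,q}(q(m-1))$ for every $m\ge 1$.

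Next I would establish the key one-step recurrence
$$f_{2,q}(qm)=f_{2,q}(q(m-1))+f_{2,q}(m)\qquad(m\ge 1).$$
This follows by applying the second case of Theorem \ref{thm2} at $n=qm-1$, so that $q\mid n+1=qm$ and $(n+1)/q=m$; this yields $f_{2,q}(qm)=f_{2,q}(qm-1)+f_{2,q}(m)$, after which I replace $f_{2,q}(qm-1)$ by $f_{2,q}(q(m-1))$ using the flatness just established. Summing (equivalently, inducting on $m$) then telescopes to
$$f_{2,q}(qm)=f_{2,q}(0)+\sum_{k=1}^{m}f_{2,q}(k),$$
where the base value is $f_{2,q}(0)=1$, the empty sum being the unique representation of $0$; alternatively one verifies the base case $m=1$ by hand, namely $f_{2,q}(q)=f_{2,q}(q-1)+f_{2,q}(1)=1+1$.

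Finally, combining the telescoped identity with the flatness on blocks, for $n=qm+r$ with $0\le r<q$ and $m\ge 1$ I obtain
$$f_{2,q}(n)=f_{2,q}(qm)=1+f_{2,q}(1)+f_{2,q}(2)+\cdots+f_{2,q}(m),$$
which is the asserted formula. I expect no serious obstacle here, since the entire arithmetic content already resides in Theorem \ref{thm2}; the only points demanding care are the boundary bookkeeping (checking that no multiple of $q$ lies strictly between $q(m-1)$ and $qm$, so that the flatness step is valid) and pinning down the base value $f_{2,q}(0)=1$, which is what makes the constant term on the right-hand side equal to $1$.
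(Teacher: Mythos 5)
Your proof is correct and is precisely the deduction the paper intends: the paper states that Corollary \ref{cor6} follows immediately from Theorem \ref{thm2} without writing out the details, and your argument (flatness on the blocks $[qm,qm+q-1]$, the step $f_{2,q}(qm)=f_{2,q}(q(m-1))+f_{2,q}(m)$, and telescoping with the base case $f_{2,q}(q)=2$ checked by hand) fills them in correctly. Your care with the $m=1$ boundary, where the flatness step cannot be pushed back to $f_{2,q}(0)$ via Theorem \ref{thm2}, is exactly the right point to watch.
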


\begin{corollary}\label{cor7}
Let $q_2>q_1$ be odd integers. Then for any $n\ge 1$ we have
$$
f_{2,q_1}(n)\ge f_{2,q_2}(n).
$$
\end{corollary}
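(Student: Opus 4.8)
The plan is to deduce the inequality from Corollary~\ref{cor6} by strong induction on $n$. Setting $m_i=\lfloor n/q_i\rfloor$ for $i\in\{1,2\}$, Corollary~\ref{cor6} rewrites each count as a partial sum
\[
f_{2,q_i}(n)=1+\sum_{k=1}^{m_i}f_{2,q_i}(k)\qquad(m_i\ge1),
\]
while for $m_i=0$, i.e. $1\le n<q_i$, only powers of $2$ are available as parts, so the binary representation is unique and $f_{2,q_i}(n)=1$ (consistent with reading the empty sum as $0$). The two structural facts I would exploit are that $q_2>q_1$ forces $m_2=\lfloor n/q_2\rfloor\le\lfloor n/q_1\rfloor=m_1$, and that $q_i\ge3$ gives $m_i\le n/3<n$, so every index occurring on the right-hand side is strictly below $n$ and the induction closes.

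For the base of the induction I would treat the range $1\le n<q_1$: here $m_1=m_2=0$, so both sides equal $1$ and the inequality holds with equality. I would then dispose of the intermediate range $q_1\le n<q_2$, where $m_2=0$ but $m_1\ge1$; in this case $f_{2,q_2}(n)=1$ while $f_{2,q_1}(n)=1+\sum_{k=1}^{m_1}f_{2,q_1}(k)\ge1$, since each $f_{2,q_1}(k)\ge0$ (indeed $\ge1$ by Corollary~\ref{cor4}).

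In the inductive step, for $n\ge q_2$ we have $m_1\ge m_2\ge1$, and for each $1\le k\le m_2<n$ the induction hypothesis gives $f_{2,q_1}(k)\ge f_{2,q_2}(k)$. Hence
\[
f_{2,q_2}(n)=1+\sum_{k=1}^{m_2}f_{2,q_2}(k)\le1+\sum_{k=1}^{m_2}f_{2,q_1}(k)\le1+\sum_{k=1}^{m_1}f_{2,q_1}(k)=f_{2,q_1}(n),
\]
where the first inequality is the induction hypothesis and the second drops the nonnegative terms $f_{2,q_1}(k)$ for $m_2<k\le m_1$. This completes the induction.

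The argument is essentially bookkeeping once Corollary~\ref{cor6} is available, so I do not expect a serious obstacle; the only points demanding care are the boundary cases where one or both floors vanish (so that Corollary~\ref{cor6}, stated for $m\ge1$, must be supplemented by the direct evaluation $f_{2,q_i}(n)=1$ for $n<q_i$) and the verification that the indices $k\le m_i$ are genuinely smaller than $n$, which is exactly where the hypothesis $q_1\ge3$ enters.
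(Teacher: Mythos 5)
Your proof is correct. The paper does not actually write out a proof of this corollary --- it merely asserts that Corollaries \ref{cor4}--\ref{cor7} ``can be deduced from Theorem \ref{thm2} immediately'' --- and your strong induction through the partial-sum identity of Corollary \ref{cor6} (itself a telescoped form of Theorem \ref{thm2}) is exactly the natural way to make that deduction rigorous; the boundary cases $n<q_i$ and the verification that $\lfloor n/q_i\rfloor<n$ are handled properly.
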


We are also interested in the parity problems of $f_{p,q}(n)$. As another application of Theorem \ref{thm2}, our next theorem states as follows.

\begin{theorem}\label{thm3}
Let $q>2$ be odd integer. Then we have
$$
\liminf_{x\rightarrow\infty}\frac{\#\big\{n\le x:2|f_{2,q}(n)\big\} }{x}\ge \frac{1}{2q}-\frac{1}{2q^2}
$$
and
$$
\#\big\{n\le x:2\nmid f_{2,q}(n)\big\}\gg_q x^{\frac{\log \frac{q-1}{2}}{\log q}}
$$
for $q\ge 5$. Moreover, $f_{2,3}(n)$ take odd values infinitely many often.
\end{theorem}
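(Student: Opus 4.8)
The plan is to reduce everything to the parity sequence $a_n:=f_{2,q}(n)\bmod 2$ and to exploit the self-similar recursion that Theorem \ref{thm2} produces modulo $2$. Reducing Theorem \ref{thm2} mod $2$, one has $a_n=a_{n-1}$ whenever $q\nmid n$, while $a_{qt}\equiv a_{qt-1}+a_t\pmod 2$. Hence $a$ is constant on each block $I_t:=\{qt,\dots,qt+q-1\}$; writing $c_t$ for that common value gives $c_t\equiv c_{t-1}+a_t$ with $a_t=c_{\lfloor t/q\rfloor}$ for $t\ge q$ and $a_t=1$ for $0\le t<q$. Read one level down, this says that on each block $I_s$ the sequence $(c_t)$ is \emph{constant} when $c_s=0$ and \emph{strictly alternating} when $c_s=1$, so in the alternating case $I_s$ carries at least $(q-1)/2$ indices with $c=0$ and at least $(q-1)/2$ with $c=1$. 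This dichotomy drives all three assertions. First I would record the seeds $c_0=1$ and $c_2=1$ (from $c_1=c_0+a_1=0$ and $c_2=c_1+a_2=1$).

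For the two statements about odd values I would argue by self-replication. When $c_s=1$, the alternating block $I_s$ contains an index $s'>s$ with $c_{s'}=1$; since $c_2=1$, iterating produces infinitely many $m$ with $c_m=1$, hence infinitely many $n$ with $f_{2,3}(n)$ odd. For $q\ge 5$ the same block contributes at least $(q-1)/2\ge 2$ indices with $c=1$, and blocks attached to distinct parents are disjoint; running the branching for $J$ levels from a seed yields at least $((q-1)/2)^J$ ones at indices $\ll q^{J}$, so that $\#\{m\le M:c_m=1\}\gg M^{\frac{\log\frac{q-1}{2}}{\log q}}$. As each $c_m$ fixes the parity of the $q$ consecutive values $f_{2,q}(qm),\dots,f_{2,q}(qm+q-1)$, this transfers to $\#\{n\le x:2\nmid f_{2,q}(n)\}\gg_q x^{\frac{\log\frac{q-1}{2}}{\log q}}$, which is exactly the second claim (and the seed $c_2=1$ already covers $q=3$ in the first claim).

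For the density of even values, note that $f_{2,q}(n)$ is even precisely when $c_{\lfloor n/q\rfloor}=0$, so the density in question equals the density of zeros in $(c_m)$. The alternation dichotomy gives the robust inequality $Z(qS)\ge\frac{q-1}{2}\,O(S)$, where $O(S),Z(S)$ count the ones and zeros among $c_0,\dots,c_{S-1}$; using $O(S)=S-Z(S)$ this becomes $z_{qS}\ge\frac{q-1}{2q}(1-z_S)$ for the zero-densities $z_N:=Z(N)/N$. The hard part will be upgrading this single cross-scale inequality into a genuine $\liminf$ bound, since by itself it tolerates oscillation of $z_N$ between low and high values. The plan is to run it against the complementary estimate $O(qS)\ge\frac{q-1}{2}O(S)$ (ones beget ones) together with the trivial $z_S=1-O(S)/S$, tracking the pair $(O,Z)$ along the scales $S,qS,q^2S,\dots$: whenever the one-density is small the zero-density is already large by complementarity, while whenever it is large the bound $Z(qS)\ge\frac{q-1}{2}O(S)$ forces many zeros at the next scale, and balancing the two regimes is what should yield the stated $\tfrac{1}{2q}-\tfrac{1}{2q^2}=\frac{q-1}{2q^2}$. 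I expect this scale-interplay to be the real obstacle; a cleaner alternative would be to use that $(a_n)$ is $q$-automatic — it is the coefficient sequence of the $\Phi$ with $\Phi(x)(1+x)=\Phi(x^q)$ over $\mathbb{F}_2$, the parity generating function of $q$-ary partitions — so that honest letter frequencies exist and can be read off from the associated transfer matrix, immediately giving a positive $\liminf$.
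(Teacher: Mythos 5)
Your reduction to the block sequence $c_t$ with the constant/alternating dichotomy is exactly the engine of the paper's proof: the paper packages the same object as $g(m)=f_{2,q}(qm-1)$ with $g(n+1)=g(n)+g(\lceil (n+1)/q\rceil)$, which is your $c_{m-1}$ up to an index shift, and your treatment of the two odd-value assertions --- each index with $c_s=1$ spawns at least $(q-1)/2$ descendants with $c=1$ in the disjoint block $I_s$ one level down, iterate from the seed, transfer back to $f$ at the cost of a factor $q$ --- coincides with the paper's argument. The one place you diverge is the $\liminf$ bound for even values, and there the comparison cuts both ways. The paper avoids any density recursion: from $g(mq+1)=g(mq)+g(m+1)$ at least one of $g(m+1),g(mq),g(mq+1)$ is even, and for $m=2i$ with $q\nmid i$ the index triples $\{2i+1,2iq,2iq+1\}$ are pairwise disjoint, which already yields the density $\tfrac{1}{2q}-\tfrac{1}{2q^2}$ with no cross-scale analysis at all. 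Your route, which you leave unexecuted and flag as ``the real obstacle,'' in fact closes in one line and gives a \emph{stronger} constant: besides $Z(qS)\ge\tfrac{q-1}{2}O(S)=\tfrac{q-1}{2}\bigl(S-Z(S)\bigr)$ you have the trivial monotonicity $Z(qS)\ge Z(S)$, and since
$$
\min_{0\le u\le S}\max\Bigl(u,\tfrac{q-1}{2}(S-u)\Bigr)=\tfrac{q-1}{q+1}\,S,
$$
you get $Z(N)\ge Z(q\lfloor N/q\rfloor)\ge\tfrac{q-1}{q+1}\lfloor N/q\rfloor$ pointwise, hence $\liminf_N Z(N)/N\ge\tfrac{q-1}{q(q+1)}>\tfrac{q-1}{2q^2}$. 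So there is no oscillation to tame, no need for the complementary inequality $O(qS)\ge\tfrac{q-1}{2}O(S)$, and certainly no need for the automaticity/transfer-matrix detour (which would anyway require you to prove that letter frequencies exist --- not automatic for $q$-automatic sequences without a primitivity argument). In summary: two of the three claims are fully proved and match the paper; the third is left as a plan with an honestly flagged gap, but the gap is illusory and your own inequalities, combined as above, finish it and even improve the stated bound.
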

Hence, both even and odd values will be taken by $f_{2,q}(n)$ infinitely many often. Following mathematical experiments, the following conjecture seems to be reasonable.

\begin{conjecture}\label{conjecture2}
We have
$$
\lim_{x\rightarrow\infty}\frac{\#\big\{n\le x:2|f_{2,3}(n)\big\} }{x}= \frac{1}{2}.
$$
\end{conjecture}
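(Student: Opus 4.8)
The plan is to reduce Conjecture \ref{conjecture2} to a clean statement about a single parity sequence and then attack the density of that sequence. Write $c(n)=f_{2,3}(n)\bmod 2$. By Theorem \ref{thm2} with $q=3$, the value $c$ is constant on each triple $\{3m,3m+1,3m+2\}$, so set $h(m):=c(3m)\in\mathbb{F}_2$. Reading Theorem \ref{thm2} modulo $2$ gives the self-similar recursion
$$
h(m+1)=h(m)\oplus h\big(\lfloor (m+1)/3\rfloor\big),\qquad h(0)=1,
$$
where $\oplus$ denotes addition in $\mathbb{F}_2$. Since each $m$ accounts for exactly three integers $n$, we have $\#\{n\le x:2\mid f_{2,3}(n)\}\sim 3\,\#\{m\le x/3:h(m)=0\}$, so Conjecture \ref{conjecture2} is equivalent to
$$
\#\{m\le M:h(m)=0\}\sim \tfrac12 M,\qquad M\to\infty.
$$
First I would record this reduction carefully (including the boundary blocks), so that the entire problem becomes the assertion that $h$ is asymptotically balanced between $0$ and $1$.

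Next I would pass to generating functions over $\mathbb{F}_2$. Writing $\mathcal{H}(x)=\sum_{m\ge 0}h(m)x^m$, the recursion translates into the Mahler-type functional equation $(1+x)\mathcal{H}(x)=(1+x+x^2)\mathcal{H}(x^3)$. Iterating and using the identity $(1+y)(1+y+y^2)=1+y^3$ in $\mathbb{F}_2$ telescopes this to the closed form
$$
\mathcal{H}(x)=\Big[(1+x)\prod_{k\ge 0}\big(1+x^{3^k}\big)\Big]^{-1}.
$$
In particular $(1+x)\mathcal{H}(x)=\prod_{k\ge0}(1+x^{3^k})^{-1}=\prod_{k\ge0}\sum_{j\ge0}x^{j3^k}$, whose $x^n$-coefficient is the number of partitions of $n$ into powers of $3$ reduced mod $2$; comparing coefficients yields $p_3(n)\equiv h(\lfloor n/3\rfloor)\pmod 2$, where $p_3$ is the ternary partition function. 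Hence the density of even values of $f_{2,3}$, the density of $\{m:h(m)=0\}$, and the density of even values of $p_3$ all coincide, and Conjecture \ref{conjecture2} is \emph{equivalent} to the statement that $p_3(n)$ is even for a set of $n$ of density $\tfrac12$. This reformulation sits squarely in the $m$-ary partition circle of ideas and opens the door to results on parities of $m$-ary partitions and to Mahler-function asymptotics.

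To extract the density I would exploit the $3$-decimation of $h$. Setting $H_1(n)=\bigoplus_{k\le n}h(k)$, the recursion gives $h(3n)=h(3n+2)=H_1(n)$ and $h(3n+1)=H_1(n)\oplus h(n)$. Summing the signs $\epsilon(m)=(-1)^{h(m)}$ over a triple and telescoping reduces the balance of $h$ on $[0,3L)$ to that of the running-sign sequence $E(n)=(-1)^{H_1(n)}$: one finds $\sum_{m<3L}\epsilon(m)=3\sum_{n<L}E(n)+O(1)$, so the target $\sum_{m<M}\epsilon(m)=o(M)$ is equivalent to $\sum_{n<L}E(n)=o(L)$. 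The hope is that $E$ is itself sufficiently equidistributed to force this cancellation, and I would try to close a finite linear system for the relevant exponential/correlation sums attached to $h$ and its running sums.

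The hard part is precisely that this system does not close. Decimating $H_1$ produces its own running sum $H_2(n)=\bigoplus_{j\le n}H_1(j)$ (indeed $H_1(3n+2)=H_1(n)\oplus H_2(n)$), and iterating spawns an infinite tower $h,H_1,H_2,\dots$ of successive running XORs. Equivalently, the $3$-kernel of $h$ is infinite, so $h$ is \emph{not} $3$-automatic and the density cannot be read off from a finite automaton or from algebraicity over $\mathbb{F}_2(x)$; the one-sided bounds of Theorem \ref{thm3} are exactly the sort of partial information such methods yield. The essential obstruction is the interaction of the base-$3$ (partition) structure with reduction mod $2$ --- a $\times 2,\times 3$ phenomenon --- and proving the \emph{exact} constant $\tfrac12$, rather than mere positive-density bounds, is the crux. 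A realistic route would be to control the whole tower simultaneously via a transfer operator for the substitution $x\mapsto x^3$, or to invoke and extend known equidistribution results for parities of $m$-ary partition functions; absent such input I expect that the elementary recursion alone will deliver only weaker two-sided density bounds.
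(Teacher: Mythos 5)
There is no proof in the paper for you to be compared against: the statement you were given is Conjecture \ref{conjecture2}, which the authors explicitly leave open, offering only the remark that it ``seems to be reasonable'' following mathematical experiments. The only rigorous results the paper proves in this direction are the one-sided bounds of Theorem \ref{thm3}, which for $q=3$ give a lower asymptotic density of at least $\frac{1}{2q}-\frac{1}{2q^2}=\frac{1}{9}$ for even values of $f_{2,3}(n)$ together with the infinitude of odd values --- far short of the exact density $\frac12$. So the honest question is whether your proposal actually proves the conjecture, and by your own admission it does not.

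Your reductions themselves are correct and verifiable: the constancy of $f_{2,3}(n)\bmod 2$ on the triples $\{3m,3m+1,3m+2\}$ follows from Theorem \ref{thm2}; the recursion $h(m+1)=h(m)\oplus h\big(\lfloor (m+1)/3\rfloor\big)$, the functional equation $(1+x)\mathcal{H}(x)=(1+x+x^2)\mathcal{H}(x^3)$ with its product solution, the decimation identities $h(3n)=h(3n+2)=H_1(n)$ and $h(3n+1)=H_1(n)\oplus h(n)$, and the identity $\sum_{m<3L}\epsilon(m)=3\sum_{n<L}\epsilon(3n)+O(1)$ all check out. But these are reformulations, not progress toward the constant $\frac12$: the last identity says only that the mean of $\epsilon$ over $[0,3L)$ asymptotically equals its mean along multiples of $3$, a self-similarity constraint satisfied just as well by the constant sequence $\epsilon\equiv 1$, so it cannot by itself force cancellation. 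Note also that your ``equivalence'' with the parity of the ternary partition function is available in the paper in a much stronger form: Proposition \ref{pro1} gives $f_{2,3}(n)=b_3(n)$ identically, so the mod-$2$ generating-function detour re-derives a special case of something the authors prove directly. Two further cautions on the speculative part: your claim that the $3$-kernel of $h$ is infinite (hence that $h$ is not $3$-automatic) is asserted, not proven --- and if it were false, the density might in fact be computable by automaton methods, so this point deserves care; and the appeal to algebraicity over $\mathbb{F}_2(x)$ is misplaced, since Christol's theorem links algebraicity over $\mathbb{F}_2$ to $2$-automaticity, not $3$-automaticity. In short: your reformulations are sound and could serve as a starting point, but the conjecture is exactly as open after your proposal as it is in the paper.
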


Unluckily, at present we are not able to establish parallel results for $p>2$. We make some related conjectures which seem to be supported by mathematical experiments.

\begin{conjecture}\label{conjecture4}
If $\gcd(p,q)=1$ and $q>p>2$, then the sign of
$$
f_{p,q}(n+1)-f_{p,q}(n)
$$
changes infinitely many often.
\end{conjecture}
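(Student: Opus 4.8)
The plan is to work through the generating function
$$
F(x)=\prod_{\alpha,\beta\ge 0}\bigl(1+x^{p^{\alpha}q^{\beta}}\bigr)=\sum_{n\ge 0}f_{p,q}(n)x^{n},
$$
and to exploit the functional equation it satisfies. Splitting off the factors with $\beta=0$ and reindexing $\beta\mapsto\beta+1$ in the rest gives $F(x)=H(x)F(x^{q})$, where $H(x)=\prod_{\alpha\ge 0}(1+x^{p^{\alpha}})$. When $p=2$ the binary representation is unique, so $H(x)=1/(1-x)$ and the equation collapses to $(1-x)F(x)=F(x^{q})$; reading off coefficients recovers exactly the recursion of Theorem \ref{thm2}, which forces $f_{2,q}$ to be non-decreasing. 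For $p\ge 3$ the coefficients of $H$ are the indicator $h(j)$ of those $j$ whose base-$p$ digits all lie in $\{0,1\}$, so $H(x)\ne 1/(1-x)$. This is precisely the structural reason to expect monotonicity to fail. Since Corollary \ref{cor3} already supplies infinitely many $n$ with $f_{p,q}(n+1)>f_{p,q}(n)$, it remains only to produce infinitely many $n$ with $f_{p,q}(n+1)<f_{p,q}(n)$.

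To that end I would read off the signed recursion. Writing $d_j=h(j)-h(j-1)\in\{-1,0,1\}$, the identity $(1-x)F(x)=\bigl((1-x)H(x)\bigr)F(x^{q})$ yields
$$
f_{p,q}(n+1)-f_{p,q}(n)=\sum_{m\ge 0}d_{\,n+1-qm}\,f_{p,q}(m),
$$
where $d_j=+1$ when $j$ has $0/1$ digits but $j-1$ does not, and $d_j=-1$ when $j-1$ has $0/1$ digits but $j$ does not. For $p\ge 3$ one has $d_2=-1$, so choosing $n\equiv 1\pmod{q}$ places a $-1$ on the term of largest index $m=(n-1)/q$, i.e.\ on the term whose argument is the biggest occurring in the sum. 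The hoped-for mechanism is that this leading negative term dominates the remaining $\pm 1$ contributions for infinitely many such $n$, producing the desired decrease.

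The main obstacle is exactly the control of that remaining sum. The surviving weights $f_{p,q}(m)$ with $m<(n-1)/q$ are attached to a sparse but genuinely present set of $+1$'s—the base-$p$ carries that create $0/1$ numbers along the progression $j\equiv n+1\pmod{q}$—and their total can be as large as $\sum_{m}f_{p,q}(m)$, which need not be dominated by the single leading term. For $p=2$ monotonicity rescued us because $d_j\ge 0$ identically; for $p\ge 3$ no such sign condition holds, and, crucially, we have neither monotonicity nor any usable bound on the consecutive ratios $f_{p,q}(m+1)/f_{p,q}(m)$. Theorem \ref{thm1} is far too coarse here: its error factor $1+O(\log\log n/\log n)$ in the exponent completely swamps the $O(1)$ multiplicative gaps between neighbouring values that decide the sign. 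Thus the heart of the matter is a fine correlation estimate between the base-$p$ carry pattern along arithmetic progressions of difference $q$ and the values of $f_{p,q}$ itself. An alternative route—attacking the problem by a direct combinatorial injection between representations of $n+1$ and of $n$, or by playing the dual equation $F(x)=\bigl(\prod_{\beta}(1+x^{q^{\beta}})\bigr)F(x^{p})$ against the one above—would still have to supply this same fine-grained information, which lies well beyond the logarithmic asymptotics currently available, and is what leaves the statement conjectural.
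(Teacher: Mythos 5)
The statement you are addressing is Conjecture \ref{conjecture4}; the paper offers no proof of it, only the remark that it is supported by numerical experiments, so there is no ``paper's proof'' to compare against. Your submission is likewise not a proof: it is a program with an explicitly acknowledged gap at the decisive step, and that gap is exactly where the whole difficulty of the conjecture lives. The preparatory material is sound --- the functional equation $F(x)=H(x)F(x^{q})$ with $H(x)=\prod_{\alpha\ge 0}(1+x^{p^{\alpha}})$ is correct, the coefficients of $H$ are indeed the indicator of integers with base-$p$ digits in $\{0,1\}$, the signed recursion $f_{p,q}(n+1)-f_{p,q}(n)=\sum_{m}d_{\,n+1-qm}f_{p,q}(m)$ follows, and the $p=2$ specialization collapsing to $(1-x)F(x)=F(x^{q})$ is a genuine sanity check that recovers Theorem \ref{thm2} and Corollary \ref{cor4}. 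Your observation that $d_{2}=-1$ for $p\ge 3$ and that choosing $n\equiv 1\pmod q$ attaches this $-1$ to the largest-index weight $f_{p,q}((n-1)/q)$ is a reasonable place to look for a decrease.

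But the claim that this single negative term ``dominates the remaining $\pm1$ contributions for infinitely many such $n$'' is asserted as a hope, not established, and you say so yourself. Nothing in the paper --- and nothing you derive --- bounds $\sum_{m<(n-1)/q}f_{p,q}(m)$ against $f_{p,q}((n-1)/q)$; the positive terms $d_{j}=+1$ occur with positive density along the relevant progression, and since $f_{p,q}$ grows superpolynomially the partial sums of earlier values are not obviously negligible compared with the top term. Theorem \ref{thm1} is, as you note, far too coarse to decide the sign of an $O(1)$-scale difference. So the submission should be read as an informative reduction of the conjecture to a correlation estimate between base-$p$ carry patterns along progressions of difference $q$ and the values of $f_{p,q}$, not as a proof; the conjecture remains open.
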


Conjecture \ref{conjecture4}, if true, would provide another example of the famous Chebyshev bias phenomenon \cite{Chebyshev}. One can also compare Conjecture \ref{conjecture4} with Corollary \ref{cor4}.

\begin{conjecture}\label{conjecture3}
We have
$
f_{p,q}(n+1)=f_{p,q}(n)
$
for infinitely many $n$.
\end{conjecture}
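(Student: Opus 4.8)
The statement naturally divides according to whether $p=2$. When $p=2$ it is in fact already a theorem: Theorem \ref{thm2} shows that $f_{2,q}(n+1)=f_{2,q}(n)$ precisely when $q\nmid n+1$, and Corollary \ref{cor5} records that this occurs on a set of $n$ of density $(q-1)/q$; in particular it occurs infinitely often. So the entire content of the conjecture lies in the range $p>2$, where Theorem \ref{thm2} has no known analogue, and this is what the plan below aims at.

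The first step I would take is to generalise the mechanism behind Theorem \ref{thm2} by splitting every representation according to divisibility by $p$. Since $\gcd(p,q)=1$, the admissible terms $p^{\alpha}q^{\beta}$ that are coprime to $p$ are exactly the pure powers $q^{\beta}$, while those divisible by $p$ are exactly $p\cdot\{p^{\alpha}q^{\beta}:\alpha,\beta\in\N\}$. Writing $\sigma(T)=\sum_{t\in T}t$, this yields the exact recursion
\begin{equation*}
f_{p,q}(n)=\sum_{T}f_{p,q}\!\Big(\tfrac{n-\sigma(T)}{p}\Big),
\end{equation*}
the sum running over the finite sets $T$ of distinct powers of $q$ with $\sigma(T)\le n$ and $\sigma(T)\equiv n\pmod p$. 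When $p=2$ every $q^{\beta}$ is odd, so the congruence $\sigma(T)\equiv n\pmod 2$ depends only on the parity of $|T|$, and it is exactly this parity collapse that underlies Theorem \ref{thm2}. The plan is then to run the same bookkeeping for $n$ and $n+1$ simultaneously and to compare the two expansions term by term, hoping to match admissible $T$'s for $n$ with those for $n+1$ and thereby force cancellation into an equality on a large set of $n$.

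The main obstacle is precisely that for $p>2$ this comparison no longer decouples. Passing from $n$ to $n+1$ shifts the residue that the pure-$q$-power part must hit modulo $p$, and since the values $q^{\beta}\bmod p$ are spread over the subgroup they generate in $(\Z/p\Z)^{\ast}$ rather than merely flipping sign, the families of admissible $T$ for $n$ and for $n+1$ are governed by a genuine subset-sum condition modulo $p$ and do not correspond under any obvious bijection. A softer, purely analytic route does reduce the problem but cannot finish it: by Theorem \ref{thm1} one has $f_{p,q}(N)=N^{o(1)}$, so if $f_{p,q}$ were eventually nondecreasing then $\sum_{n<N}\big(f_{p,q}(n+1)-f_{p,q}(n)\big)=N^{o(1)}$ would force all but $N^{o(1)}$ of the steps to be flat, and the conjecture would follow with density one. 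Hence one may assume $f_{p,q}$ decreases infinitely often; but combined with Corollary \ref{cor3} this only says the walk moves in both directions, which is consistent with a never-flat alternating pattern and so does not by itself yield a single equality. Forcing the flat steps therefore appears to need the arithmetic input of the recursion above, and extracting that input explicitly for $p>2$ is, I expect, the crux of the matter.
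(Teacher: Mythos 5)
This statement is Conjecture \ref{conjecture3} of the paper: the authors explicitly say they are unable to establish such results for $p>2$ and offer the statement only as a conjecture supported by numerical experiments, so there is no proof in the paper to compare against. Your proposal is likewise not a proof, and you say so yourself. The $p=2$ case is correctly disposed of via Theorem \ref{thm2} and Corollary \ref{cor5}, and the recursion you write down for general $p$ (splitting a representation into its pure powers of $q$ and its terms divisible by $p$, the latter contributing $f_{p,q}\big((n-\sigma(T))/p\big)$) is valid and is a sensible generalisation of the combinatorial mechanism of Section \ref{section3}. But the term-by-term matching of admissible sets $T$ for $n$ and for $n+1$ is precisely where your argument stops, and that is precisely where the open problem lives; so the honest verdict is: a correct reduction, not a proof.

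One concrete error in your fallback analytic argument should be flagged. You assert that Theorem \ref{thm1} gives $f_{p,q}(N)=N^{o(1)}$. It gives the opposite: $f_{p,q}(N)=2^{\frac{(\log N)^2}{2\log p\log q}(1+o(1))}=N^{\frac{\log 2\,\log N}{2\log p\log q}(1+o(1))}$, which is superpolynomial in $N$. Consequently the telescoping identity $\sum_{n<N}\big(f_{p,q}(n+1)-f_{p,q}(n)\big)=f_{p,q}(N)-f_{p,q}(1)$ bounds nothing useful: even if $f_{p,q}$ were eventually nondecreasing, the integer-valuedness of the increments would only cap the number of strictly increasing steps by $f_{p,q}(N)$, which exceeds $N$, so not a single flat step is forced. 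That entire branch of the plan gives nothing and should be deleted; the problem genuinely requires the arithmetic input you identify in the subset-sum condition modulo $p$, which neither you nor the paper supplies.
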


Nearly the completeness of our manuscript, we used the recurrence formula obtained in Theorem \ref{thm2} to generate the sequence $\{f_{2,3}(n)\}_{n=1}^{\infty}$ and then searched for them from OEIS. We found out that the sequence $\{f_{2,3}(n)\}_{n=1}^{\infty}$ coincides with sequence A062051 of OEIS. A little thought persuades us that they are indeed the same sequences. We then further recognized the relations between $f_{2,q}(n)$ and $m$-ary partitions.

In A062051 of `{\it The On-Line Encyclopedia of Integer Sequence}' (OEIS) founded in 1964 by N. J. A. Sloane, it recorded the sequence $a(n)$ defined as the number of partitions of $n$ into powers of $3$. The sequence $a(n)$ could be further extended to the general ones known as the $m$-ary partition function (see, e.g. \cite{Andrews,Mahler,Rodseth}) defined below.

\begin{definition}\label{def1}
Let $m\ge 2$ be a given integer. Define the $m$-ary partition function $b_m(n)$ to be the number of partitions of $n$ into powers of $m$.
\end{definition}

We will show that the sequence $\big\{f_{2,q}(n)\big\}$ coincides with the sequence $\big\{b_{q}(n)\big\}$.

\begin{proposition}\label{pro1}
Let $q>2$ be an odd number. Then $f_{2,q}(n)=b_{q}(n)$ for any $n\ge1$.
\end{proposition}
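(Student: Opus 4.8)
The plan is to prove the identity by induction on $n$, exploiting the fact that Theorem \ref{thm2} already exhibits $f_{2,q}$ as the solution of a simple recurrence. Rewriting the statement of Theorem \ref{thm2} with $N=n+1$, we have for every $N\ge 2$
$$
f_{2,q}(N)=f_{2,q}(N-1)+
\begin{cases}
f_{2,q}(N/q), & q\mid N,\\
0, & q\nmid N.
\end{cases}
$$
So it suffices to show that the $q$-ary partition function $b_q$ satisfies exactly the same recurrence, together with $b_q(1)=1=f_{2,q}(1)$.

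First I would establish the classical $m$-ary recurrence for $b_q$. Split the partitions of $N$ into powers of $q$ according to whether or not the part $1=q^{0}$ occurs. Partitions containing at least one part equal to $1$ are in bijection with partitions of $N-1$ (delete, respectively adjoin, a single part $1$), and hence contribute $b_q(N-1)$. Partitions containing no part equal to $1$ have all parts divisible by $q$; dividing every part by $q$ gives a bijection with partitions of $N/q$, and such partitions exist only when $q\mid N$. Adding the two contributions shows that $b_q(N)=b_q(N-1)$ when $q\nmid N$ and $b_q(N)=b_q(N-1)+b_q(N/q)$ when $q\mid N$, which is precisely the recurrence displayed above.

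Comparing the two recurrences, both $f_{2,q}$ and $b_q$ obey the same rule for $N\ge 2$. Since $f_{2,q}(1)=1$ (the unique representation $1=2^{0}q^{0}$) and $b_q(1)=1$ (the unique partition $1=q^{0}$), a straightforward induction on $N$ yields $f_{2,q}(N)=b_q(N)$ for all $N\ge 1$. I do not expect a genuine obstacle here: essentially all of the content is the classical $m$-ary recurrence for $b_q$, and the only points requiring care are the boundary value and the observation that the recursive argument $N/q$ satisfies $1\le N/q<N$ whenever $q\mid N$ and $N\ge 2$, so that the induction hypothesis always applies.

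Finally, I would note that the identity also admits a transparent bijective proof, which explains the coincidence conceptually and is independent of Theorem \ref{thm2}. Given a partition $N=\sum_{\beta\ge 0}c_\beta q^{\beta}$ into powers of $q$, expand each multiplicity in binary, $c_\beta=\sum_{\alpha\ge 0}\varepsilon_{\alpha\beta}2^{\alpha}$ with $\varepsilon_{\alpha\beta}\in\{0,1\}$; then $N=\sum_{\alpha,\beta}\varepsilon_{\alpha\beta}2^{\alpha}q^{\beta}$ is a sum of \emph{distinct} elements of $\{2^{\alpha}q^{\beta}\}$, the terms being pairwise distinct because $\gcd(2,q)=1$. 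Conversely, a subset of $\{2^{\alpha}q^{\beta}\}$ summing to $N$ determines multiplicities $c_\beta=\sum_{\alpha:\,2^{\alpha}q^{\beta}\ \mathrm{chosen}}2^{\alpha}$, and the uniqueness of binary expansions makes the two maps mutually inverse. This bijection directly proves $f_{2,q}(N)=b_q(N)$.
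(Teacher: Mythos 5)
Your proposal is correct, and in fact contains two complete arguments. The bijective argument you sketch at the end (binary-expand the multiplicities $c_\beta$ of a $q$-ary partition to get a subset of $\{2^{\alpha}q^{\beta}\}$, and conversely collect the chosen $2^{\alpha}$'s attached to each fixed $\beta$) is essentially word-for-word the paper's own proof of Proposition \ref{pro1}; the distinctness of the terms $2^{\alpha}q^{\beta}$ and the invertibility of the two maps both rest on $q$ being odd, exactly as in the paper. Your primary route, via the common recurrence, is genuinely different and also sound: the $m$-ary recurrence for $b_q$ that you derive is classical, the boundary case and the descent $N/q<N$ are handled correctly, and the induction closes. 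The one point you should make explicit is the non-circularity: the paper observes that Theorem \ref{thm2} \emph{can} be deduced from Proposition \ref{pro1} together with known identities for $b_q$, so your recurrence argument must invoke the independent combinatorial proof of Theorem \ref{thm2} given in Section \ref{section3} (which nowhere uses Proposition \ref{pro1}). With that citation in place the recurrence route is a valid alternative; what the bijection buys over it is that it is self-contained, explains the coincidence structurally rather than numerically, and does not need Theorem \ref{thm2} at all.
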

\begin{proof}
Suppose that $n$ has a partition into powers of $q$ as
$
n=\sum_{i=0}^{k}a_iq^i
$
with $a_i\ge 1$. Clearly, $a_i$ has a unique $2$-adic expansion 
$
a_i=\sum_{j=0}^{g_i}\varepsilon_{i,j} 2^j
$
with $\varepsilon_j=0$ or $1$. Hence,
$$
n=\sum_{i=0}^{k}q^i\sum_{j=0}^{g_i}\varepsilon_{i,j} 2^j=\sum_{i,j}2^jq^i
$$
is an expression of $n$ counted by $f_{2,q}(n)$, which means $b_{q}(n)\le f_{2,q}(n)$.

Now, suppose that $n$ has an expression $n=\sum_{\alpha,\beta}2^\alpha q^\beta$ such that all terms in the expression are different. For the same $\beta$, we collect all the corresponding $2^\alpha$ and then sum them together to obtain a partition of $n$ into powers of $q$. This implies that $b_{q}(n)\ge  f_{2,q}(n)$. Therefore, we conclude that $f_{2,q}(n)=b_{q}(n)$.
\end{proof}

We noted from \cite[Page 2]{Rodseth} that 
$$
b_q(qn)-b_q(n)=b_q(qn-1)
$$
and 
$$
b_q(qn-1)=b_q(qn-2)=\cdots=b_q(qn-q),
$$
which together with Proposition \ref{pro1} would lead to Theorem \ref{thm2}. It seems (by combining with Proposition \ref{pro1}) that the recurrence formula displayed in Theorem \ref{thm2} was first established by Mahler \cite[Line -3, Page 8]{Mahler} as early as 1940.
Since we recognized the relations between $f_{2,q}(n)$ and $b_q(n)$ only after we nearly completed the manuscript as mentioned above, we will give an alternative proof of Theorem \ref{thm2} later in Section \ref{section3}. Theorem \ref{thm2} was proved by Mahler via generation functions \cite[Section 5, Page 8]{Mahler} while our proof of Theorem \ref{thm2} focuses on the combinatorial feature of $f_{2,q}(n)$.

It is also worth mentioning that Mahler \cite[the last formula of his article]{Mahler} (in our notations) obtained that
$$
\log f_{2,q}(n)\sim \frac{(\log n)^2}{2\log q}, \quad \text{as~}n\rightarrow\infty.
$$
Thus, Corollary \ref{cor1} can be viewed as an extension of Mahler's old result. Mahler's article is also related to the $m$-ary partitions. However, it does not seem easy to give a natural extension of Mahler's proof to the $\log$-asymptotic formula of $f_{p,q}(n)$. Another interesting point is that Mahler's proof of the $\log$-asymptotic formula of $f_{2,q}(n)$ is based on the theory of integral functions which is not elementary at all. While our extended $\log$-asymptotic formula of $f_{p,q}(n)$ is based on a recent result of Yu \cite{Yu} (see Lemma \ref{lemma} below) whose argument is entirely elementary.

Our last theorem surpasses Theorem \ref{thm1} in some aspects. 

\begin{theorem}\label{thm4}
Let $q>2$ be an odd integer. Then we have
$$
\lim_{n\rightarrow\infty}\frac{f_{2,q}(n+1)}{f_{2,q}(n)}=1.
$$
\end{theorem}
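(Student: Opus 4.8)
The plan is to reduce the statement, via Theorem~\ref{thm2} and Corollary~\ref{cor6}, to a single divergence statement and then to prove that statement by a self-improving recursion built from the block structure of $f_{2,q}$. Throughout write $f(k)=f_{2,q}(k)$ and $S(x)=\sum_{k=1}^{x}f(k)$. By Theorem~\ref{thm2}, $f(n+1)/f(n)=1$ whenever $q\nmid n+1$, while for $n+1=qm$ we have $f(qm)/f(qm-1)=1+f(m)/f(qm-1)$, and Corollary~\ref{cor6} gives $f(qm-1)=1+S(m-1)$. Hence it suffices to show $f(m)/(1+S(m-1))\to 0$ as $m\to\infty$. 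Setting $b_m:=S(m)/f(m)$ and observing that $(1+S(m-1))/f(m)>S(m-1)/f(m)=b_m-1$, the theorem follows once I prove $b_m\to\infty$.

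The heart of the matter is a recursion for $b_m$. Corollary~\ref{cor6} (equivalently Theorem~\ref{thm2}) says that $f$ is constant, with value $1+S(\ell)$, on each block $[q\ell,\,q\ell+q-1]$; summing over the blocks filling $[0,qM-1]$, where $M=\lfloor m/q\rfloor$, gives
$$
S(m-1)\ \ge\ q\sum_{\ell=0}^{M-1}\bigl(1+S(\ell)\bigr)-1\ \ge\ q\sum_{\ell=1}^{M-1}S(\ell).
$$
Now I would bound the inner sum from below using only monotonicity in the sharp form $S(\ell)\ge S(M)-(M-\ell)f(M)$: retaining the top $t$ terms and optimizing over $t$ (the optimum is near $t=b_M-\tfrac12$) yields $\sum_{\ell=1}^{M-1}S(\ell)\ge\tfrac12 f(M)\,b_M(b_M-1)$. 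Dividing by $f(m)=1+S(M)$ and adding back the $1$ in $b_m=S(m-1)/f(m)+1$, this collapses to the clean inequality
$$
b_m\ \ge\ 1+\frac{q}{2}\,(b_M-1)\,\frac{b_M}{b_M+1},\qquad M=\lfloor m/q\rfloor,
$$
valid for all large $m$. The essential point is that one must sum over $\approx b_M$ blocks, not merely the top one: the single-block estimate only produces a constant lower bound for $b_m$, whereas the quadratic gain above is what supplies the multiplier $q/2>1$.

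For the base of the recursion, the crude bound $S(N)\ge S(M)+(N-M)f(M)$ together with $1+S(M)\le(M+1)f(M)$ gives $b_N\ge \tfrac{S(M)}{1+S(M)}+\tfrac{N-M}{M+1}$, whence $\liminf_{N}b_N\ge q$. Writing $G(x)=1+\tfrac q2(x-1)\tfrac{x}{x+1}$, the recursion reads $b_m\ge G(b_{\lfloor m/q\rfloor})$; the map $G$ is increasing on $[1,\infty)$, and its only fixed points are $1$ and $\tfrac{2}{q-2}$, both strictly below $q$. Since the base values satisfy $b_N\ge q-o(1)>\tfrac{2}{q-2}$, iterating the recursion down the chain $m\mapsto\lfloor m/q\rfloor$ keeps the iterates in the region $G(x)>x$, and the number of steps tends to infinity with $m$; hence $b_m\ge G^{(k)}(b_{\mathrm{base}})\to\infty$, which is what we need. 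I expect the crux to be the recursion itself: the asymptotic in Theorem~\ref{thm1} is useless here because its exponent carries an error of size $O(\log n\,\log\log n)$, dwarfing the $O(\log n)$ differences that control these ratios, so the multiplier $q/2>1$ has to be mined directly from the exact recurrence; verifying that the base value $q$ clears the fixed point $\tfrac{2}{q-2}$ is the only place where the smallest case $q=3$ could have caused trouble.
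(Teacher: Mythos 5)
Your proposal is correct, and it takes a genuinely different route from the paper's. Both arguments ultimately come down to showing that $f(qk-1)/f(k)\to\infty$, but the mechanisms differ. The paper introduces the block function $g(k)=f(qk-1)=\cdots=f(qk-q)$ together with a second auxiliary function $h$ defined by its own recurrence, proves $g(n)\ge h(n)\,g(\lceil n/q\rceil)$ by induction, and separately shows $h(n)\ge\lfloor\log n/\log q\rfloor$ by a case-by-case induction; the divergence it extracts is logarithmic. You instead track $b_m=S(m)/f(m)$ with $S(m)=\sum_{k\le m}f(k)$ and mine the self-improving recursion $b_m\ge 1+\tfrac q2(b_M-1)\tfrac{b_M}{b_M+1}$, $M=\lfloor m/q\rfloor$, from the exact block structure $f\equiv 1+S(\ell)$ on $[q\ell,q\ell+q-1]$ (Corollary \ref{cor6}) by summing over roughly $b_M$ top blocks; this summation over many blocks is the idea the paper does not use, and it buys a multiplier $q/2>1$ per level, hence a power-of-$m$ lower bound for $b_m$ and in fact $f(n+1)/f(n)=1+O(n^{-c})$, quantitatively stronger than the paper's implicit $1+O(1/\log n)$. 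Your check that the base value $q$ clears the fixed point $2/(q-2)$ (tight only at $q=3$, where the fixed point is $2<3$) is exactly the right point to verify. One presentational caution: the downward chain $m\mapsto\lfloor m/q\rfloor$ must be stopped at a fixed threshold $N_0$ beyond which $b_N$ is known to exceed the largest fixed point of $G$ (say $b_N\ge q-\tfrac12$, available from your $\liminf$ bound), since iterating all the way down would reach $b_1=1$, itself a fixed point of $G$; your sketch gestures at this but should state it explicitly. Like the paper's proof, yours uses only the recurrence of Theorem \ref{thm2} (through Corollaries \ref{cor4} and \ref{cor6}), so it transfers verbatim to the $m$-ary partition function as in Corollary \ref{cor8}.
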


Since the proofs of Theorem \ref{thm4} only uses the recurrence formula of Theorem \ref{thm2},
it is perhaps of particular interest to point out that by the same arguments of Theorem \ref{thm4} the following slightly general result of $m$-ary partitions holds, which sees never to have been mentioned in literatures.

\begin{corollary}\label{cor8}
Let $m\ge2$ be a given integer. Then we have
$$
\lim_{n\rightarrow\infty}\frac{b_m(n+1)}{b_{m}(n)}=1.
$$
\end{corollary}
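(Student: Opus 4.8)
The plan is to reduce everything to the recurrence of Theorem \ref{thm2} in its $m$-ary form, plus a single growth estimate. First I would record the general $m$-ary analogue of Theorem \ref{thm2} (the general form due to Mahler and R\o dseth, proved exactly as Theorem \ref{thm2}/Proposition \ref{pro1}): writing $T(k):=b_m(mk)$, the relations $b_m(mk+r)=b_m(mk)$ for $0\le r\le m-1$ together with $b_m(mk)=b_m(mk-1)+b_m(k)$ give the clean self-referential recurrence $T(k)=T(k-1)+T(\lfloor k/m\rfloor)$ with $T(0)=1$, and moreover $b_m(k)=T(\lfloor k/m\rfloor)$. Since $b_m(n+1)=b_m(n)$ unless $m\mid n+1$, it suffices to treat $n+1=mk$, where
$$\frac{b_m(mk)}{b_m(mk-1)}=1+\frac{b_m(k)}{b_m(mk-1)}=1+\frac{T(\lfloor k/m\rfloor)}{T(k-1)}.$$
Thus the whole statement is equivalent to $T(\lfloor k/m\rfloor)/T(k-1)\to 0$.

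Next I would telescope the recurrence (the $m$-ary analogue of Corollary \ref{cor6}) to obtain $T(k-1)=1+\sum_{j=1}^{k-1}b_m(j)=1+\sum_{j=1}^{k-1}T(\lfloor j/m\rfloor)$, and group the indices $j$ into residue blocks modulo $m$ to get $T(k-1)\asymp m\sum_{w<\lfloor k/m\rfloor}T(w)$. Because $b_m$ is non-decreasing (the analogue of Corollary \ref{cor4}), the problem becomes the purely one-variable assertion that the partial sums dominate the last term, i.e. $T(u)=o\big(\sum_{w\le u}T(w)\big)$ as $u\to\infty$. Equivalently, one must show that the number of indices $w\le u$ with $T(w)\ge\tfrac12 T(u)$ tends to infinity, for then $\sum_{w\le u}T(w)$ exceeds a growing multiple of $T(u)$.

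The crux — and the main obstacle — is precisely this counting statement, which is a subexponential growth estimate for $b_m$. I see two routes. The soft route feeds in the sharp logarithmic asymptotic $\log b_m(n)\sim (\log n)^2/(2\log m)$ (Mahler \cite{Mahler}; for odd $m$ it is Corollary \ref{cor1} via Proposition \ref{pro1}): then $\log T(u)-\log T(u-t)\approx (t\log u)/(u\log m)$, so about $u\log m\log 2/\log u\to\infty$ indices $w$ satisfy $T(w)\ge\tfrac12T(u)$, whence $T(\lfloor k/m\rfloor)/T(k-1)\ll \log k/k\to 0$ uniformly in $m\ge2$. The delicate point is that one really needs the sharp constant $1/(2\log m)$: the crude two-sided bounds $\log b_m(n)=\Theta((\log n)^2)$ that the recurrence yields on its own differ by a constant factor, and that factor is exactly enough to destroy the estimate, with the binary case $m=2$ the tightest. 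The self-contained route, matching the claim that only the recurrence is used, is to prove the quantitative bound $T(\lfloor k/m\rfloor)/T(k-1)\ll \log k/k$ directly by strong induction on $k$: the inductive hypothesis bounds the increments $\log\big(T(j)/T(j-1)\big)$, hence lower-bounds the count above, which in turn bounds $T(\lfloor k/m\rfloor)/T(k-1)$. I expect getting this self-referential induction to close with a consistent constant — while controlling the block-grouping boundary terms and the dependence on $m$ — to be the hardest part. Once $T(\lfloor k/m\rfloor)/T(k-1)\to 0$ is established, Corollary \ref{cor8} (and Theorem \ref{thm4}, the case $m=q$ odd) follows at once.
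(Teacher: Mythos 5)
Your reduction is the same as the paper's: passing to $T(k)=b_m(mk)$ (the paper's $g$), everything comes down to showing $T(\lfloor k/m\rfloor)/T(k-1)\to 0$. But that limit is the entire content of the proof, and neither of your two routes establishes it. The ``soft route'' does not work as stated: the asymptotic $\log b_m(n)\sim (\log n)^2/(2\log m)$ carries a relative error $o(1)$ in the exponent, i.e.\ an additive error $o((\log n)^2)$ in $\log T$, which is far too coarse to control local differences $\log T(u)-\log T(w)$ at the scale $\log 2$. You cannot differentiate an asymptotic formula: it is perfectly consistent with $\log T(u)=(1+o(1))(\log mu)^2/(2\log m)$ that $T$ is essentially flat on $[u/2,u-1]$ and then increases by a factor $\exp\bigl((\log u)^{3/2}\bigr)$ at $u$, in which case your count of indices $w\le u$ with $T(w)\ge \tfrac12 T(u)$ is $O(1)$ rather than $\asymp u\log m/\log u$. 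So the obstacle is not the sharp constant $1/(2\log m)$, as you suggest, but the absence of any increment-level information in a log-asymptotic. Your second route --- proving $T(\lfloor k/m\rfloor)/T(k-1)\ll \log k/k$ by a self-referential induction --- aims at the right kind of statement, but you explicitly leave it open and flag it as the hardest part; as written, the crucial step is simply missing.

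The paper closes exactly this gap with a different and entirely elementary device that you may want to compare with. It introduces an auxiliary sequence $h$ with $h(n+1)=h(n)+1$ when $q\nmid n$ and $h(n+1)=h(n)\bigl(1-1/h(\tfrac{n}{q}+1)\bigr)+1$ otherwise, and proves two inductions: $h(n)\ge\lfloor\log n/\log q\rfloor$ (Lemma \ref{lemma4}) and $g(n)\ge h(n)\,g(\lceil n/q\rceil)$ (Lemma \ref{lemma5}). The second inequality is precisely the quantitative statement you need, and it yields $g(\lceil (k+1)/q\rceil)/g(k+1)\le 1/\lfloor\log(k+1)/\log q\rfloor\to 0$ with no appeal to Mahler's asymptotic (only a $1/\log$ decay rate, weaker than your conjectured $\log k/k$, but sufficient). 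The same two inductions go through verbatim with $q$ replaced by an arbitrary $m\ge 2$, which is why the paper states Corollary \ref{cor8} as an immediate byproduct of Theorem \ref{thm4}. To repair your argument you would need either to carry out your induction in full or to import an inequality of the type of Lemma \ref{lemma5}.
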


\section{Proof of Theorem \ref{thm1}}\label{section2}

The proof of Theorem \ref{thm1} is based on the following nice result of Yu \cite[Theorem 1.1]{Yu}.

\begin{lemma}\label{lemma} 
Let $p,q>1$ be integers with $\gcd(p,q)=1$. Then, for any real number $\epsilon>0$, there exists a positive real number $c=c(p,q,\epsilon)$ such that every sufficiently large integer $n$ can be represented as a sum of distinct integers of the form $p^{\alpha}q^{\beta}$ ($\alpha,\beta\in\mathbb{N}$), all of which are greater than
$$
cn/(\log n)^{1+\epsilon}.
$$
\end{lemma}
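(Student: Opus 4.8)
The plan is to pass to the logarithmic scale, where the set $S:=\{p^{\alpha}q^{\beta}:\alpha,\beta\in\N\}$ becomes the additive semigroup $\{\alpha\log p+\beta\log q:\alpha,\beta\in\N\}$. Since $p,q>1$ are coprime they are multiplicatively independent, so $\log p/\log q$ is irrational and, by Weyl equidistribution (or Kronecker's theorem), the points $\alpha\log p+\beta\log q$ are well distributed. From this I would extract two quantitative inputs. First, a \emph{gap bound}: there is a constant $C=C(p,q)$, and more precisely the number of elements of $S$ in any dyadic block $[X,2X]$ is $\bigl(1+o(1)\bigr)\frac{\log 2}{\log p\log q}\log X$, so consecutive elements of $S$ near a scale $s$ have ratio $1+O(1/\log s)$ and difference $O\bigl(s/\log s\bigr)$. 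Second, a \emph{counting estimate}: writing $T:=cn/(\log n)^{1+\epsilon}$, the triangle count gives
$$
\#\bigl(S\cap(T,n]\bigr)=\frac{(\log n)^{2}-(\log T)^{2}}{2\log p\log q}\bigl(1+o(1)\bigr)\asymp_{p,q}\log n\,\log\log n,
$$
so there are many admissible parts, spread across every dyadic scale between $T$ and $n$.

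With these in hand, the backbone of the representation is the Birch--Erd\H os theorem together with Davenport's observation, both recorded in Section \ref{section1}: every large $n$ is already a sum of distinct elements of $S$ in which the exponent $\beta$ is bounded by some $B=B(p,q)$. The task is therefore to \emph{upgrade} such a representation so that every part exceeds $T$. I would first perform a greedy dyadic descent using only admissible parts: repeatedly subtract from the running residual the largest element of $S$ lying below it, which by the gap bound peels off a part within a factor $1+O(1/\log n)$ of the residual and hence drives the residual down quickly while staying above $T$. This clears the ``bulk'' of $n$ and reduces matters to representing a small residual.

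The delicate point, and the heart of the argument, is \emph{exactness}: the descent leaves a residual that must be cleared \emph{to zero} using parts still larger than $T$. Here the generic tools fail. Because $S\cap(T,n]$ is a \emph{sparse} set (only $\asymp\log n\log\log n$ elements up to $n$), the subset-sum and interval-covering theorems that require $\gg\sqrt n$ summands do not apply; worse, by lower bounds for linear forms in logarithms the pairwise differences of large elements of $S$ are themselves large, so no fine local resolution is available from a bounded number of parts. Consequently the exact hitting of $n$ cannot come from density alone and must exploit the multiplicative arithmetic of $p$ and $q$. The plan is to fix the low-order behaviour by working in base $q$: since $\gcd(p,q)=1$, each $p^{\alpha}$ is invertible modulo every $q^{k}$, so by choosing which admissible powers $p^{\alpha}q^{\beta}$ to include one can match $n$ modulo successive powers of $q$ and carry in the style of the correspondence $f_{2,q}(n)=b_{q}(n)$ of Proposition \ref{pro1}, absorbing the carries into the larger parts supplied by the descent.

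I expect this reconciliation step to be the main obstacle: one must simultaneously (i) keep every part strictly above the moving threshold $T=cn/(\log n)^{1+\epsilon}$, (ii) keep all parts distinct across both the greedy and the digit-fixing phases, and (iii) close the induction so that the residual reaches exactly zero. Pushing the threshold all the way up to $n/(\log n)^{1+\epsilon}$, rather than a cruder $n/\log n$, is precisely what forces the careful bookkeeping between the counting estimate, which guarantees enough admissible parts, and the arithmetic adjustment, which guarantees exactness; this is where the factor $(\log n)^{1+\epsilon}$, and the freedom to choose $c=c(p,q,\epsilon)$, will be spent.
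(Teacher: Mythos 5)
First, a point of comparison: the paper does not prove Lemma \ref{lemma} at all --- it is quoted verbatim from Yu \cite{Yu} and used as a black box in Section \ref{section2} (the authors explicitly contrast Yu's ``entirely elementary'' argument with Mahler's analytic one). So there is no in-paper proof to measure your attempt against; the benchmark is Yu's Birch-style elementary argument. Judged on its own terms, your proposal is an outline whose decisive step is left open, and you say so yourself: the ``reconciliation''/exactness phase is announced as the main obstacle rather than carried out. That step \emph{is} the lemma.

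Two concrete problems. (i) Your greedy descent needs a \emph{uniform} gap bound for $S=\{p^{\alpha}q^{\beta}\}$ --- that the largest element of $S$ below $x$ is within $O(x/\log x)$ of $x$ --- but Weyl equidistribution only gives the count of $S$ in $[X,2X]$, hence the \emph{average} gap. Bounding the maximal gap between consecutive elements of $S$ is itself a deep theorem (Tijdeman, via Baker's method on linear forms in logarithms), and the exponent it produces is an effective constant that need not equal $1$; the ratio bound $1+O(1/\log s)$ you assert is unjustified as a worst-case statement. (ii) More seriously, even granting the descent, it cannot terminate: once the residual falls near the threshold $T=cn/(\log n)^{1+\epsilon}$ you must hit it \emph{exactly} with parts all exceeding $T$, and your proposed fix --- matching $n$ modulo successive powers of $q$ --- naturally consumes \emph{small} terms $p^{\alpha}q^{\beta}$, precisely what the threshold forbids. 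The analogy with Proposition \ref{pro1} does not transfer: that bijection is special to $p=2$ (binary expansion of the base-$q$ digit multiplicities) and in any case says nothing about the sizes of the parts. Likewise, invoking Birch--Erd\H os with Davenport's bounded-$\beta$ refinement hands you a representation whose parts may be as small as $1$; upgrading it so that every part exceeds $T$ is not a perturbation of that representation, it is the entire content of the statement. Until the exactness step is an actual argument --- in Yu's proof this is done by an induction on scales in the spirit of Birch's completeness argument, tracking the size of every term used --- what you have is a research plan, not a proof.
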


\begin{proof}[Proof of Theorem \ref{thm1}]
Throughout the proof, $n$ is supposed to be sufficiently large.
 
We first give the upper bound of $f_{p,q}(n)$. The number of pairs $(\alpha,\beta)\in \mathbb{N}^2$ such that $p^{\alpha}q^{\beta}\le n$ can be computed as 
\begin{align}\label{eq1}
\sum_{p^{\alpha}q^{\beta}\le n}1&=\sum_{\alpha \le \log n/\log p}\sum_{\beta\le \log (n/p^\alpha)/\log q}1\nonumber\\
&=\sum_{\alpha \le \log n/\log p}\left(\left\lfloor \frac{\log n-\alpha\log p}{\log q}\right\rfloor+1\right)\nonumber\\
&=\frac{(\log n)^2}{2\log p\log q}+O(\log n).
\end{align}
Let $\mathscr{E}=\left\{(\alpha,\beta)\in \mathbb{N}^2: p^{\alpha}q^{\beta}\le n\right\}$. Then by (\ref{eq1}) we have 
$$
|\mathscr{E}|= \frac{(\log n)^2}{2\log p\log q}+O(\log n),
$$
from which we obtain
$$
f_{p,q}(n)\le 2^{|\mathscr{E}|}=2^{\frac{(\log n)^2}{2\log p\log q}\big(1+O(1/\log n)\big)}.
$$

For the lower bound of $f_{p,q}(n)$, let 
$$
\widetilde{\mathscr{E}}=\left\{(\alpha,\beta)\in \mathbb{N}^2: p^{\alpha}q^{\beta}\le n/(\log n)^4\right\}.
$$
Then, by the same argument as (\ref{eq1}) (i.e., replacing $n$ by $n/(\log n)^4$) we have
\begin{align}\label{eq2}
\big|\widetilde{\mathscr{E}}\big|&= \frac{\Big(\log n-\log (\log n)^4\Big)^2}{2\log p\log q}+O(\log n)\nonumber\\
&=\frac{(\log n)^2}{2\log p\log q}+O(\log n\log\log n).
\end{align}
Let $P^{\widetilde{\mathscr{E}}}=\left\{S:S\subset \widetilde{\mathscr{E}}\right\}$. Then for any $S\in P^{\widetilde{\mathscr{E}}}$, by (\ref{eq2}) we clearly have
\begin{align}\label{eq3}
\sum_{(\alpha,\beta)\in S}p^\alpha q^\beta\le \frac{n}{(\log n)^4}|S|\le \frac{n}{(\log n)^4}\big|\widetilde{\mathscr{E}}\big|\ll \frac{n}{(\log n)^4}(\log n)^2=\frac{n}{(\log n)^2}.
\end{align}
From (\ref{eq3}) we know that 
\begin{align}\label{eq4}
\widetilde{n}:=n-\sum_{(\alpha,\beta)\in S}p^\alpha q^\beta\ge n-\frac{n}{\log n}>\frac12 n
\end{align}
for any $S\in \widetilde{\mathscr{E}}$. 

Now, we are in a position to make use of Lemma \ref{lemma}.
By Lemma \ref{lemma} it is clear that $\widetilde{n}$ has an expression of distinct integers of the form $p^{\alpha}q^{\beta}$ like
$$
\widetilde{n}=\sum_{(\alpha,\beta)\in \mathcal{D}}p^\alpha q^\beta,
$$
all of whose terms in the summation above are greater than
$
c\widetilde{n}/(\log \widetilde{n})^{1+\epsilon},
$
where $\mathcal{D}$ is a nonempty set, $\epsilon>0$ is arbitrarily small and $c$ is a constant depending only on $p,q$ and $\epsilon$. By (\ref{eq4}) (with $\epsilon=1/2$) we have
\begin{align}\label{eq5}
c\widetilde{n}/(\log \widetilde{n})^{1+\epsilon}\gg \frac{n}{(\log n)^{3/2}}.
\end{align}
Hence, for any $S\in P^{\widetilde{\mathscr{E}}}$ there corresponds with an expression of $n$ as
\begin{align}\label{eq6}
n=\sum_{(\alpha,\beta)\in S}p^\alpha q^\beta+\sum_{(\alpha,\beta)\in \mathcal{D}}p^\alpha q^\beta.
\end{align}
By (\ref{eq3}) and (\ref{eq5}), all terms $p^\alpha q^\beta$ in (\ref{eq6}) are different, which means that each $S\in P^{\widetilde{\mathscr{E}}}$ contributes a different expression of $n$.
So, there are at least $\big|P^{\widetilde{\mathscr{E}}}\big|$ different expressions of $n$ written as the desired forms. Therefore, we conclude that
$$
f_{p,q}(n)\ge \big|P^{\widetilde{\mathscr{E}}}\big|=2^{|\widetilde{\mathscr{E}}|}=2^{\frac{(\log n)^2}{2\log p\log q}\big(1+O(\log\log n/\log n)\big)}
$$
from (\ref{eq2}), proving our theorem.
\end{proof}

\section{A combinatorial proof of Theorem \ref{thm2}}\label{section3}
As mentioned in the abstract, the recurrence formula of Theorem \ref{thm2} first appeared in Mahler's article \cite{Mahler} (in view of Proposition \ref{pro1}). We recognized this fact only after we nearly completed our manuscript. Hence, we would like to provide our new proof of it.
 
From now on, let $q>2$ be a fixed odd integer. {\bf We denote $f_{2,q}(n)$ by $f(n)$ for brevity.} For any two disjoint subsets $\mathcal{A}$ and $\mathcal{B}$ of $\mathbb{N}$,
let $f_{\mathcal{A},\overline{\mathcal{B}}}(n)$ be the number of different expressions of $n$ written as
\begin{align}\label{eq3-1}
n=\sum_{\alpha,\beta}2^{\alpha}q^{\beta} \quad (2^{\alpha}q^{\beta} ~\text{are~all~different})
\end{align}
such that each element of $\mathcal{A}$ equals some term $2^{\alpha}q^{\beta}$ in (\ref{eq3-1}) while each element of $\mathcal{B}$ does not equal any term $2^{\alpha}q^{\beta}$ in (\ref{eq3-1}).
For example, $f_{\{q^3,2^5q\},\overline{\{2^2,q^2,2^5\}}}(n)$ will be the number of different expressions of $n$ written as $\sum_{\alpha,\beta}2^{\alpha}q^{\beta}$ such that $q^3$ and $2^5q$ are contained
in the expression while $2^2,q^2$ and $2^5$ do not occur in it.

For discussions, we will also need some similar notations. Define
$f_{\mathcal{A}}(n)$ to be the number of different expressions of $n$ written as (\ref{eq3-1})
such that each element of $\mathcal{A}$ equals some term $2^{\alpha}q^{\beta}$ in (\ref{eq3-1}). Define
$f_{\overline{\mathcal{B}}}(n)$ to be the number of different expressions of $n$ written as (\ref{eq3-1})
such that each element of $\mathcal{B}$ does not equal any term $2^{\alpha}q^{\beta}$ in (\ref{eq3-1}).

For the proof of Theorem \ref{thm2}, we need the following lemmas. 

\begin{lemma}\label{lemma2}
For any given integers $t\ge 0$ and $n\ge 1$ we have
$$
f_{\{2^{t+1}\},\overline{\{1,2,\cdots,2^{t}\}}}(n+1)=f_{\{1,2,\cdots,2^{t}\},\overline{\{2^{t+1}\}}}(n).
$$ 
\end{lemma}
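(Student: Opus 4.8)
The plan is to establish a bijection between the set of expressions counted by the left-hand side and the set counted by the right-hand side. Both quantities count expressions of the form $\sum_{\alpha,\beta}2^\alpha q^\beta$ with all terms distinct; the constraint is which small powers of $2$ (those with $\beta=0$) appear. On the left, we count expressions of $n+1$ in which $2^{t+1}=2^{t+1}q^0$ appears but none of $1,2,\dots,2^t$ (i.e.\ $2^0,\dots,2^t$) appears; on the right, we count expressions of $n$ in which all of $1,2,\dots,2^t$ appear but $2^{t+1}$ does not. Notice that $1+2+\cdots+2^t = 2^{t+1}-1$, which is exactly $2^{t+1}$ shifted down by one — this is the arithmetic identity that should drive the bijection.

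**The map** I would construct sends an expression of $n$ counted on the right to an expression of $n+1$ counted on the left by deleting the block $\{1,2,\dots,2^t\}$ and inserting the single term $2^{t+1}$, leaving every other term (all terms $2^\alpha q^\beta$ with $\beta\ge 1$, together with any larger pure powers $2^\alpha$, $\alpha>t+1$, that were present) unchanged. Since $2^{t+1} = 1+2+\cdots+2^t + 1$, replacing the block by $2^{t+1}$ raises the total from $n$ to $n+1$ exactly. I must check the result is a legitimate expression of the correct type: the inserted $2^{t+1}$ is a valid term $2^{t+1}q^0$, and it is distinct from all surviving terms because the right-hand expression had $2^{t+1}$ \emph{absent} by hypothesis. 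Conversely, the inverse map takes an expression of $n+1$ counted on the left, removes the term $2^{t+1}$ (guaranteed present), and inserts the full block $1,2,\dots,2^t$ (guaranteed absent, so no collision), lowering the total back to $n$; the surviving terms with $\beta\ge 1$ or $\alpha>t+1$ are again untouched. One verifies the two maps are mutually inverse, giving the claimed equality of counts.

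**The main thing to be careful about** is the distinctness bookkeeping, and in particular confirming that the two defining constraints interact correctly with the forward and backward maps. The forward map's output must genuinely lie in the set $f_{\{2^{t+1}\},\overline{\{1,\dots,2^t\}}}$: the ``$\overline{\{1,\dots,2^t\}}$'' condition holds because we removed precisely those terms, and the ``$\{2^{t+1}\}$'' condition holds because we inserted it; crucially the insertion causes no repeated term only because the source forbade $2^{t+1}$. Symmetrically for the inverse. I would also note that terms of the form $2^\alpha q^\beta$ with $\beta\ge 1$ are never equal to any pure power of $2$ (since $q$ is odd and $>1$, so $q\nmid 2^\alpha$), so the ``small powers'' block and the rest of the expression live in disjoint worlds and the surgery on $\{1,\dots,2^t\}$ versus $2^{t+1}$ cannot accidentally clash with a $\beta\ge 1$ term. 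This disjointness is what makes the bijection clean, and it is the one step where the hypothesis that $q$ is odd is silently used.
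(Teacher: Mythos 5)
Your proposal is correct and is essentially the paper's own argument: the paper likewise exchanges the block $1+2+\cdots+2^t=2^{t+1}-1$ for the single term $2^{t+1}$ (compare (\ref{eq3-2}) and (\ref{eq3-3})), with the membership/exclusion constraints guaranteeing the swap is a well-defined bijection; the paper merely treats the degenerate cases $2^{t+1}>n+1$ and $2^{t+1}=n+1$ separately, which your bijection also covers.
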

\begin{proof}
If $2^{t+1}>n+1$, then $1+2+\cdots+\cdots+2^{t}=2^{t+1}-1>n$ and hence
$$
f_{\{2^{t+1}\},\overline{\{1,2,\cdots,2^{t}\}}}(n+1)=f_{\{1,2,\cdots,2^{t}\},\overline{\{2^{t+1}\}}}(n)=0.
$$
If $2^{t+1}=n+1$, then $1+2+\cdots+\cdots+2^{t}=n$ and hence
$$
f_{\{2^{t+1}\},\overline{\{1,2,\cdots,2^{t}\}}}(n+1)=f_{\{1,2,\cdots,2^{t}\},\overline{\{2^{t+1}\}}}(n)=1.
$$

We are left over to consider the case $2^{t+1}< n+1$. If $n+1$ has an expression
\begin{align}\label{eq3-2}
n+1=2^{t+1}+\sum_{\alpha,\beta}2^{\alpha}q^{\beta}, \quad (2^{\alpha}q^{\beta}\neq 1,2,\cdots,2^{t},2^{t+1})
\end{align}
then we have
\begin{align}\label{eq3-3}
n=1+2+\cdots+2^t+\sum_{\alpha,\beta}2^{\alpha}q^{\beta}, \quad (2^{\alpha}q^{\beta}\neq 1,2,\cdots,2^{t},2^{t+1})
\end{align}
and vice versa. By (\ref{eq3-2}) and (\ref{eq3-3}) we have
$$
f_{\{2^{t+1}\},\overline{\{1,2,\cdots,2^{t}\}}}(n+1)=f_{\{1,2,\cdots,2^{t}\},\overline{\{2^{t+1}\}}}(n),
$$ 
proving our lemma.
\end{proof}

\begin{lemma}\label{lemma3}
For any given integers $t\ge 0$ and $n\ge 1$ we have
$$
f(n+1)-f(n)=f_{\overline{\{1,2,2^2,\cdots,2^t\}}}(n+1)-f_{\{1,2,2^2,\cdots,2^t\}}(n).
$$ 
\end{lemma}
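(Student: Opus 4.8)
The plan is to prove Lemma \ref{lemma3} by induction on $t$, using Lemma \ref{lemma2} as the engine that advances the induction. First I would dispose of the base case $t=0$. Here the claim reads $f(n+1)-f(n)=f_{\overline{\{1\}}}(n+1)-f_{\{1\}}(n)$. To see this, I would partition all expressions of $n+1$ according to whether the term $1$ appears: $f(n+1)=f_{\{1\}}(n+1)+f_{\overline{\{1\}}}(n+1)$, and similarly $f(n)=f_{\{1\}}(n)+f_{\overline{\{1\}}}(n)$. The key observation is the bijection $f_{\{1\}}(n+1)=f_{\overline{\{1\}}}(n)$: an expression of $n+1$ that contains the term $1$ corresponds, by deleting that $1$, exactly to an expression of $n$ that avoids $1$ (and conversely, adjoining $1$ to an expression of $n$ avoiding $1$ gives an expression of $n+1$ containing $1$). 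Substituting these identities into $f(n+1)-f(n)$ cancels the common $f_{\overline{\{1\}}}(n+1)$ against $f_{\overline{\{1\}}}(n)$ appropriately and collapses to the desired right-hand side.

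Next I would carry out the inductive step: assuming the formula holds for some $t\ge 0$, I would derive it for $t+1$. The strategy is to split each of the two terms on the right-hand side of the inductive hypothesis according to the presence or absence of the next power $2^{t+1}$. Writing
\begin{align*}
f_{\overline{\{1,2,\cdots,2^t\}}}(n+1)&=f_{\{2^{t+1}\},\overline{\{1,2,\cdots,2^t\}}}(n+1)+f_{\overline{\{1,2,\cdots,2^t,2^{t+1}\}}}(n+1),\\
f_{\{1,2,\cdots,2^t\}}(n)&=f_{\{1,2,\cdots,2^t,2^{t+1}\}}(n)+f_{\{1,2,\cdots,2^t\},\overline{\{2^{t+1}\}}}(n),
\end{align*}
I would then invoke Lemma \ref{lemma2}, which states exactly that $f_{\{2^{t+1}\},\overline{\{1,2,\cdots,2^t\}}}(n+1)=f_{\{1,2,\cdots,2^t\},\overline{\{2^{t+1}\}}}(n)$. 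Subtracting the second display from the first, this pair of terms cancels, leaving
$$
f_{\overline{\{1,2,\cdots,2^t\}}}(n+1)-f_{\{1,2,\cdots,2^t\}}(n)=f_{\overline{\{1,2,\cdots,2^{t+1}\}}}(n+1)-f_{\{1,2,\cdots,2^{t+1}\}}(n).
$$
Combined with the inductive hypothesis, the left-hand side equals $f(n+1)-f(n)$, which is the statement at level $t+1$, completing the induction.

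The step I expect to require the most care is verifying the two splitting identities used in the inductive step, since they rely on the fact that $\{1,2,\cdots,2^t\}$ and $\{2^{t+1}\}$ are disjoint so that the decorated counting functions $f_{\A,\overline{\B}}$ genuinely partition the relevant expressions into disjoint cases. One must check that "the element $2^{t+1}$ either does or does not appear'' is a clean dichotomy that refines the conditions already imposed, which is immediate from the definitions of $f_{\A}$ and $f_{\overline{\B}}$ but should be stated explicitly. The application of Lemma \ref{lemma2} itself is then purely mechanical. I note that the conclusion holds for \emph{every} $t\ge 0$ and does not depend on $t$ on the left-hand side, which is the feature that makes this lemma useful: one is free to choose $t$ large later (in the eventual proof of Theorem \ref{thm2}) so that the powers of $2$ up to $2^t$ exhaust all relevant terms below $n+1$.
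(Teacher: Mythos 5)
Your proposal is correct and is essentially the paper's own argument: the paper establishes the base case via the bijection $f_{\{1\}}(n+1)=f_{\overline{\{1\}}}(n)$ and then performs the same splitting on $2^{t+1}$ together with Lemma \ref{lemma2}, iterating the step (``continuing the process above'') rather than phrasing it as a formal induction on $t$. Your version merely makes that induction explicit, which is a presentational difference, not a mathematical one.
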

\begin{proof}
If $n+1$ has an expression
\begin{align*}
n+1=1+\sum_{\alpha,\beta}2^{\alpha}q^{\beta}, \quad (2^{\alpha}q^{\beta}\neq 1)
\end{align*}
then we have
\begin{align*}
n=\sum_{\alpha,\beta}2^{\alpha}q^{\beta}, \quad (2^{\alpha}q^{\beta}\neq 1)
\end{align*}
and vice versa, which means that
\begin{align}\label{eq3-4}
f_{\{1\}}(n+1)=f_{\overline{\{1\}}}(n).
\end{align}
Note that 
\begin{align}\label{eq3-5}
f(n+1)=f_{\{1\}}(n+1)+f_{\overline{\{1\}}}(n+1) \quad  \text{and}  \quad f(n)=f_{\{1\}}(n)+f_{\overline{\{1\}}}(n)
\end{align}
From (\ref{eq3-4}) and (\ref{eq3-5}), 
\begin{align}\label{eq3-6}
f(n+1)-f(n)=f_{\overline{\{1\}}}(n+1)-f_{\{1\}}(n).
\end{align}
Clearly, we have
\begin{align}\label{eq3-7}
f_{\overline{\{1\}}}(n+1)=f_{\{2\},\overline{\{1\}}}(n+1)+f_{\overline{\{1,2\}}}(n+1)
\end{align}
and
\begin{align}\label{eq3-8}
f_{\{1\}}(n)=f_{\{1,2\}}(n)+f_{\{1\},\overline{\{2\}}}(n).
\end{align}
By Lemma \ref{lemma2} (with $t=0$), (\ref{eq3-6}), we have
$$
f_{\{2\},\overline{\{1\}}}(n+1)=f_{\{1\},\overline{\{2\}}}(n),
$$
which together with (\ref{eq3-7}) and (\ref{eq3-8}) lead to
\begin{align}\label{eq3-9}
f(n+1)-f(n)=f_{\overline{\{1,2\}}}(n+1)-f_{\{1,2\}}(n).
\end{align}
Furthermore, noting that
\begin{align}\label{eq3-10}
f_{\overline{\{1,2\}}}(n+1)=f_{\{2^2\},\overline{\{1,2\}}}(n+1)+f_{\overline{\{1,2,2^2\}}}(n+1)
\end{align}
and
\begin{align}\label{eq3-11}
f_{\{1,2\}}(n)=f_{\{1,2,2^2\}}(n)+f_{\{1,2\},\overline{\{2^2\}}}(n).
\end{align}
By Lemma \ref{lemma2} (with $t=1$), we have
$$
f_{\{2^2\},\overline{\{1,2\}}}(n+1)=f_{\{1,2\},\overline{\{2^2\}}}(n),
$$
which together with (\ref{eq3-9}), (\ref{eq3-10}) and (\ref{eq3-11}) lead to
\begin{align*}
f(n+1)-f(n)=f_{\overline{\{1,2,2^2\}}}(n+1)-f_{\{1,2,2^2\}}(n).
\end{align*}
Continuing the process above, for any integer $t\ge 0$ we obtain
\begin{align}\label{eq3-12}
f(n+1)-f(n)=f_{\overline{\{1,2,2^2,\cdots,2^t\}}}(n+1)-f_{\{1,2,2^2,\cdots,2^t\}}(n),
\end{align}
which ends up with the argument of our lemma.
\end{proof}

We are ready to provide the proof of Theorem \ref{thm2}.

\begin{proof}[Proof of Theorem \ref{thm2}]
Let $n\ge 1$ be any given integer. Let $t$ be the minimal integer such that $2^{t+1}-1>n$. Then
$$
f_{\{1,2,2^2,\cdots,2^t\}}(n)=0.
$$
By Lemma \ref{lemma3}, we have
\begin{align}\label{eq3-13}
f(n+1)-f(n)=f_{\overline{\{1,2,2^2,\cdots,2^t\}}}(n+1).
\end{align}
From $2^{t+1}>n+1$ we know that there are no powers of $2$ occurred in any expression counted by $f_{\overline{\{1,2,2^2,\cdots,2^t\}}}(n+1)$. Suppose now that 
$$
f_{\overline{\{1,2,2^2,\cdots,2^t\}}}(n+1)\ge 1
$$ 
and $\sum_{\alpha,\beta}2^\alpha q^\beta$ is such an expression of $n+1$ counted by $f_{\overline{\{1,2,2^2,\cdots,2^t\}}}(n+1)$, then each term $2^\alpha q^\beta$ in the expression $\sum_{\alpha,\beta}2^\alpha q^\beta$ satisfies $\beta\ge 1$, which means that
$
q|n+1
$ and vice versa. Therefore, we conclude that
$$
f(n+1)-f(n)=f_{\overline{\{1,2,2^2,\cdots,2^t\}}}(n+1)=0
$$
if and only if $q\nmid n+1$. 

For $q| n+1$, we assume 
$
n+1=qk
$
and then $f_{\overline{\{1,2,2^2,\cdots,2^t\}}}(qk)\ge 1$. Since  $2^{t+1}>n+1=qk$, there are no powers of $2$ occurred in any expression counted by
$f_{\overline{\{1,2,2^2,\cdots,2^t\}}}(qk)$. Now, let $\sum_{\alpha,\beta}2^\alpha q^\beta$ be an expression of $qk$ counted by $f_{\overline{\{1,2,2^2,\cdots,2^t\}}}(qk)$, then each term $2^\alpha q^\beta$ in the expression $\sum_{\alpha,\beta}2^\alpha q^\beta$ satisfies $\beta\ge 1$. Therefore, from
$$
\sum_{\alpha,\beta}2^\alpha q^\beta=q\sum_{\alpha,\beta}2^\alpha q^{\beta-1}
$$
we deduce that 
\begin{align}\label{eq3-14}
f_{\overline{\{1,2,2^2,\cdots,2^t\}}}(qk)=f(k)=f\Big(\frac{n+1}{q}\Big).
\end{align}
Hence, we obtain
\begin{align*}
f(n+1)=
\begin{cases}
f(n), &~\text{if~}q\nmid n+1;\\
f(n)+f\big(\frac{n+1}{q}\big), &~\text{otherwise}
\end{cases}
\end{align*}
from (\ref{eq3-13}) and (\ref{eq3-14}).
\end{proof}

\section{Proof of Theorem \ref{thm3}}\label{section4}
Define $g(n)$ to be $g(0)=0$,
$$
g(1)=f(q-1)=f(q-2)=\cdots=f(1)=1
$$
and
$$
g(n)=f(qn-1)=f(qn-2)=\cdots=f(qn-q),
$$
for $n\ge2$.
Then we have 
\begin{align}\label{definition-g}
g(n+1)=g(n)+g\Big(\Big\lceil \frac{n+1}{q}\Big\rceil\Big)
\end{align}
from the recurrence formula of $f(n)$ in Theorem \ref{thm2},
where $\lceil x\rceil$ is the least integer $\ge x$.
We now provide the proof of Theorem \ref{thm3}.
\begin{proof}[Proof of Theorem \ref{thm3}]
By (\ref{definition-g}), for any nonnegative integer $m$ we have
\begin{align}\label{desity-0}
g(n+1)=g(n)+g(m+1)
\end{align}
provided that $mq<n+1\le (m+1)q$, or more clearly,
\begin{align}\label{desity-1}
g(mq+1)=g(mq)+g(m+1).
\end{align}

It can be easily seen that at least one of the three numbers $g(m+1), g(mq)$ and $g(mq+1)$ is even from (\ref{desity-1}). Note that for any positive integers $j\neq \ell$  with $q\nmid j$ and $q\nmid \ell$ we have 
$$
\Big\{2jq+1,2jq,2j+1\Big\} \cap \Big\{2\ell q+1,2\ell q,2\ell+1\Big\}=\varnothing.
$$
Hence, by $m=2i$ with $q\nmid i$ we conclude that
$$
\liminf_{x\rightarrow\infty}\frac{\#\big\{n\le x:2|g(n)\big\}}{x}\ge \liminf_{x\rightarrow\infty}\frac{\#\big\{i:q\nmid i, 2qi+2\le x\big\}}{x}=\frac{1}{2q}-\frac{1}{2q^2},
$$
from which we deduce that
$$
\liminf_{x\rightarrow\infty}\frac{\#\big\{n\le x:2|f(n)\big\}}{x}\ge \liminf_{x\rightarrow\infty}\frac{\#\big\{n\le x/q:2|g(n)\big\}\cdot q}{x}=\frac{1}{2q}-\frac{1}{2q^2}.
$$

Suppose that $g(m+1)$ is odd, then the next odd value of $g(n)$ will no later than $g(mq+1)$ by using (\ref{desity-1}) once again, from which it follows that $g(n)$ (and hence $f(n)$) take odd values infinitely many often. Moreover, from (\ref{desity-0}) at least $\frac{q-1}{2}$ of $g(n)$ take odd values for $mq\le  n< (m+1)q$, provided that $g(m+1)$ is odd. Actually, the parities of $g(n)$ alternate between odd and even for $mq\le n< (m+1)q$. Therefore, each odd value $g(n)$ produce at least $\frac{q-1}{2}$ new odd values $g(\widetilde{n})$ for $nq\le \widetilde{n}< (n+1)q$ and the $\widetilde{n}$'s are all different for different $n$. For odd $g(m+1)$, we further note that the worst parity situation (i.e., exactly $\frac{q-1}{2}$ odd values) of $q$ consecutive integers in the interval $[mq,(m+1)q)$ is
$$
\text{even,~odd,~even,~odd,}~\cdots,~\text{odd,~even}.
$$ 
For $q>3$ suppose that $q^k\le x<q^{k+1}$, then from the above analysis we know that
$$
\#\big\{n\le x:2\nmid g(n)\big\}\ge \sum_{i=1}^k\left(\frac{q-1}{2}\right)^i=\frac{q-1}{q-3}\bigg(\Big(\frac{q-1}{2}\Big)^k-1\bigg).
$$
Clearly,
\begin{align*}
\Big(\frac{q-1}{2}\Big)^k>\Big(\frac{q-1}{2}\Big)^{\frac{\log x}{\log q}-1}=\frac{2}{q-1}x^{\frac{\log \frac{q-1}{2}}{\log q}}.
\end{align*}
Hence, it follows that
\begin{align*}
\#\big\{n\le x:2\nmid g(n)\big\}\ge \frac{2}{q-3}x^{\frac{\log \frac{q-1}{2}}{\log q}}.
\end{align*}
Thus, we have
\begin{align*}
\#\big\{n\le x:2\nmid f(n)\big\}\ge \#\Big\{n\le \frac{x}{q}:2\nmid g(n)\Big\}\cdot q-1\gg x^{\frac{\log \frac{q-1}{2}}{\log q}},
\end{align*}
which is our desired bound.
\end{proof}

\section{Proof of Theorem \ref{thm4}}\label{section5}
For the proof, we make use of two auxiliary functions. Let $q\ge3$ be a fixed odd integer and $n\ge1$ be integers. The first auxiliary function is $g(n)$ defined in Section \ref{section4} and the other one is defined as follows.

Let $h(n)$ be defined as $h(1)=1, h(2)=2, \cdots, h(q)=q,$ and 
\begin{align*}
h(n+1)=
\begin{cases}
h(n)+1, &~\text{if~}q\nmid n;\\
h(n)\Big(1-\frac{1}{h\big(\frac{n}{q}+1\big)}\Big)+1, &~\text{otherwise.}
\end{cases}
\end{align*}
It can be easily seen that $h(n)\ge 2$ for any $n\ge 2$.
We need two lemmas.

\begin{lemma}\label{lemma4}
Let $\lfloor x\rfloor$ be the largest integer $\le x$. Then for any integer $n\ge 1$ we have
$$
h(n)\ge \Big\lfloor \frac{\log n}{\log q}\Big\rfloor.
$$ 
\end{lemma}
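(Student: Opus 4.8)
The plan is to prove the inequality $h(n)\ge\lfloor\log n/\log q\rfloor$ by strong induction on $n$, treating the two branches of the recursion for $h$ separately. First I would dispose of the small range: for $1\le n\le q$ we have $h(n)=n$ while $\lfloor\log n/\log q\rfloor\le 1$, so the bound is immediate. More generally, the elementary facts $h(n)\ge 1$ for all $n\ge 1$ and $h(n)\ge 2$ for all $n\ge 2$ (both read directly off the recursion, the latter already noted in the text) settle every $n$ with $k:=\lfloor\log n/\log q\rfloor\le 2$, that is, all $n<q^{3}$. Thus the only substantive range is $k\ge 3$.

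For the inductive step from $n$ to $n+1$ I would split on divisibility by $q$. When $q\nmid n$ the recursion gives $h(n+1)=h(n)+1$; since $\log((n+1)/n)/\log q<1$ the floor can rise by at most one, so $h(n+1)=h(n)+1\ge\lfloor\log n/\log q\rfloor+1\ge\lfloor\log(n+1)/\log q\rfloor$, and this branch is free. The real work is the branch $q\mid n$, say $n=qm$, where the recursion contracts by the factor $1-1/h(m+1)$ and $h$ genuinely dips. Here I would first observe that $n+1=qm+1\not\equiv 0\pmod{q}$ is never a power of $q$, whence $\lfloor\log(n+1)/\log q\rfloor=\lfloor\log n/\log q\rfloor=k$; so the goal is exactly $h(qm)\big(1-1/h(m+1)\big)+1\ge k$.

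To establish this I would feed two lower bounds into the formula. From the "$+1$" branch the values at the indices $q(m-1)+1,\dots,qm$ increase by one at each step (no intermediate index is divisible by $q$), giving $h(qm)=h(q(m-1)+1)+(q-1)$; since $q(m-1)+1\ge q^{k-1}$ for $k\ge 2$, the inductive hypothesis yields $h(q(m-1)+1)\ge k-1$, hence the crucial additive boost $h(qm)\ge k+q-2$. The inductive hypothesis also gives $h(m+1)\ge k-1$ because $m+1>q^{k-1}$. Writing $A=h(qm)\ge k+q-2$ and $B=h(m+1)\ge k-1$ and using that $A(1-1/B)$ is increasing in both variables, the target $A(1-1/B)+1\ge k$ reduces, after clearing denominators, to the clean arithmetic condition $(q-2)(k-2)\ge 1$, which holds for all $q\ge 3$ and $k\ge 3$ (with equality precisely at $q=k=3$). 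This closes the induction. Throughout one must check the bookkeeping that the invoked indices $qm$, $m+1$, and $q(m-1)+1$ are all strictly less than $n+1$, so the strong induction is legitimate.

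The main obstacle is exactly the contraction factor $1-1/h(m+1)$ at the multiples of $q$: a naive use of the bare inductive bound $h(qm)\ge k$ falls short of the target by a margin of order $1/(k-1)$. The decisive observation — and the single step that needs care — is that the $q-1$ unit increments accumulated since the previous multiple of $q$ contribute an extra additive term $q-2$ to $h(qm)$, and this is precisely what converts the deficient estimate into the condition $(q-2)(k-2)\ge 1$.
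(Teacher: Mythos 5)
Your proof is correct, and every step checks out: the base range $n<q^{3}$ is covered by $h(n)\ge 2$; the branch $q\nmid n$ is immediate; and in the branch $n=qm$ the chain of unit increments $h(qm)=h(q(m-1)+1)+(q-1)\ge (k-1)+(q-1)=k+q-2$ together with $h(m+1)\ge k-1$ and the monotonicity of $A(1-1/B)$ does reduce the target $h(qm)\bigl(1-1/h(m+1)\bigr)+1\ge k$ to $(q-2)(k-2)\ge 1$, which holds for $q\ge 3$, $k\ge 3$. The overall strategy (strong induction, same base case, same split on $q\mid n$) matches the paper, but your handling of the critical branch is genuinely different and tidier. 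The paper splits $q\mid n$ into three sub-cases --- $n$ an exact power of $q$; $q\parallel n$; $q^{2}\mid n$ --- and in each one manufactures a different ad hoc boost (stepping back two indices to get $h(n)\ge k+1$ or $k+2$, or instead boosting the denominator via $h(n/q+1)\ge k$ when $q\parallel n$), each requiring its own verification that the relevant nearby index is not divisible by $q$ and still lies above $q^{k}$ or $q^{k-1}$. Your single observation that the $q-1$ increments since the previous multiple of $q$ contribute an additive $q-2$ to $h(qm)$ subsumes all three sub-cases at once and isolates the exact arithmetic margin $(q-2)(k-2)\ge 1$ (tight at $q=k=3$), which makes it transparent why the induction must start at $k=3$. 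What the paper's version buys in exchange is only that each sub-case uses a slightly smaller lookback (two indices rather than $q-1$); there is no loss of generality or strength in your argument.
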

\begin{proof}
We prove it by induction on $n$. 
Clearly, $h(1)=1>0$ and we have
$$
h(n)\ge 2\ge \Big\lfloor \frac{\log n}{\log q}\Big\rfloor
$$
for $2\le n\le q^3-1$.
Now, assume that our lemma is true for every integer $\le n$. We need to prove the truth of $n+1$ with $n+1\ge q^3$.
The subsequent proofs will be separated into several cases.

{\it Case I.}  $n=q^k$ for some integer $k\ge 3$.  Using the inductive hypothesis, we know
$$
h\big(q^k-2\big)\ge \Big\lfloor \frac{\log (q^k-2)}{\log q}\Big\rfloor=k-1.
$$
It follows that
\begin{align*}
h(n)=h\big(q^k\big)=h\big(q^k-1\big)+1=h\big(q^k-2\big)+2\ge k+1.
\end{align*}
Hence, by inductive hypothesis and the definition of $h$ we obtain
\begin{align*}
h(n+1)&=h(n)\Big(1-\frac{1}{h\big(\frac{n}{q}+1\big)}\Big)+1\\
&\ge (k+1)\Big(1-\frac{1}{h\big(q^{k-1}+1\big)}\Big)+1\\
&\ge (k+1)\left(1-\frac{1}{\Big\lfloor \frac{\log (q^{k-1}+1)}{\log q}\Big\rfloor}\right)+1\\
&=(k+1)\Big(1-\frac{1}{k-1}\Big)+1\\
&\ge \Big\lfloor \frac{\log (n+1)}{\log q}\Big\rfloor,
\end{align*}
provided that $k\ge 3$.

{\it Case II.} $q\nmid n$. In this case, we have
\begin{align*}
h(n+1)=h(n)+1\ge \Big\lfloor \frac{\log n}{\log q}\Big\rfloor+1\ge \Big\lfloor \frac{\log (n+1)}{\log q}\Big\rfloor.
\end{align*}

{\it Case III.} $q|n$ and $n$ is not a power of $q$. We assume 
$
q^k<n<q^{k+1}
$
with $k\ge 3$.
Suppose that 
$$
q^\ell | n \quad \text{but} \quad q^{\ell+1}\nmid n
$$ 
for some $1\le \ell\le k$. We note from 
$
q^k<n<q^{k+1}
$
and $q|n$
that $n\le q^{k+1}-q$. 
For $\ell=1$, then $q|n$ and $q^2\nmid n$. 
By the definition of $h$ we have
\begin{align}\label{lemma5-4-2}
h(n+1)=h(n)\Big(1-\frac{1}{h\big(\frac{n}{q}+1\big)}\Big)+1.
\end{align}
Since $q\nmid \frac{n}{q}$, we know from the inductive hypothesis that 
\begin{align}\label{lemma5-4-3}
h\Big(\frac{n}{q}+1\Big)=h\Big(\frac{n}{q}\Big)+1\ge\Big\lfloor \frac{\log (n/q)}{\log q}\Big\rfloor +1=k-1+1=k.
\end{align}
Using again the inductive hypothesis, we conclude from (\ref{lemma5-4-2}) and (\ref{lemma5-4-3}) that
\begin{align*}
h(n+1)&\ge \Big\lfloor \frac{\log n}{\log q}\Big\rfloor\Big(1-\frac{1}{k}\Big)+1\\
&=k\Big(1-\frac{1}{k}\Big)+1\\
&=\Big\lfloor \frac{\log (n+1)}{\log q}\Big\rfloor
\end{align*}
since $n+1\le q^{k+1}-q+1<q^{k+1}$.

Now, suppose that $\ell\ge 2$, i.e., $q^2|n$. Hence, 
$$
q^k+q^2\le n\le q^{k+1}-q^2.
$$
By the definition of $h$ we have
\begin{align}\label{lemma5-4-4}
h(n+1)&=h(n)\Big(1-\frac{1}{h\big(\frac{n}{q}+1\big)}\Big)+1\nonumber\\
&=\big(h(n-1)+1\big)\Big(1-\frac{1}{h\big(\frac{n}{q}+1\big)}\Big)+1\nonumber\\
&=\big(h(n-2)+2\big)\Big(1-\frac{1}{h\big(\frac{n}{q}+1\big)}\Big)+1
\end{align}
Then by the inductive hypothesis we have
\begin{align}\label{lemma5-4-5}
h\Big(\frac{n}{q}+1\Big)\ge \Big\lfloor \frac{\log (n/q+1)}{\log q}\Big\rfloor=k-1.
\end{align}
Inserting (\ref{lemma5-4-5}) into (\ref{lemma5-4-4}), by $n+1<q^{k+1}$ and the inductive hypothesis we obtain
\begin{align*}
h(n+1)&\ge \Big(\Big\lfloor \frac{\log (n-2)}{\log q}\Big\rfloor+2\Big)\Big(1-\frac{1}{k-1}\Big)+1\\
&=(k+2)\Big(1-\frac{1}{k-1}\Big)+1\\
&=\frac{k^2+k-5}{k-1}\\
&\ge \Big\lfloor \frac{\log (n+1)}{\log q}\Big\rfloor,
\end{align*}
provided that $k\ge 3$.
\end{proof}

\begin{lemma}\label{lemma5}
Let $\lceil x\rceil$ be the least integer $\ge x$. Then for any integer $n\ge 1$ we have
$$
g(n)\ge h(n) g\Big(\Big\lceil \frac{n}{q}\Big\rceil\Big).
$$ 
\end{lemma}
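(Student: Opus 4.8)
The plan is to prove the inequality by strong induction on $n$, arranging the two cases of the inductive step to mirror exactly the two cases in the definition of $h$.

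First I would dispose of the base range $1\le n\le q$. Here $\lceil n/q\rceil=1$, so $g\big(\lceil n/q\rceil\big)=g(1)=1$, and one checks directly from the recurrence (\ref{definition-g}) that $g(n)=n$ on this range (each step adds $g(1)=1$); since $h(n)=n$ for $1\le n\le q$ by definition, the claim holds with equality. For the inductive step I assume the inequality for all integers $\le n$ and prove it for $n+1$, splitting according to divisibility by $q$.

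If $q\nmid n$, then no multiple of $q$ lies in $(n,n+1]$, so $\lceil (n+1)/q\rceil=\lceil n/q\rceil=:m$, and (\ref{definition-g}) gives $g(n+1)=g(n)+g(m)$ while the definition of $h$ gives $h(n+1)=h(n)+1$. Applying the induction hypothesis $g(n)\ge h(n)g(m)$ yields $g(n+1)\ge (h(n)+1)g(m)=h(n+1)g\big(\lceil (n+1)/q\rceil\big)$, which is routine. The substance lies in the case $q\mid n$, say $n=mq$ with $m\ge 1$. Then $\lceil n/q\rceil=m$ but $\lceil (n+1)/q\rceil=m+1$, so (\ref{definition-g}) reads $g(n+1)=g(n)+g(m+1)$ and the definition of $h$ reads $h(n+1)=h(n)\big(1-\tfrac{1}{h(m+1)}\big)+1$. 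Using the induction hypothesis $g(n)\ge h(n)g(m)$, the target $g(n+1)\ge h(n+1)g(m+1)$ reduces, after cancelling the common summand $g(m+1)$ and dividing by $h(n)>0$, to the inequality $g(m+1)-g(m)\le g(m+1)/h(m+1)$. But $g(m+1)-g(m)=g\big(\lceil (m+1)/q\rceil\big)$ by (\ref{definition-g}), so this is precisely $h(m+1)\,g\big(\lceil (m+1)/q\rceil\big)\le g(m+1)$ --- the induction hypothesis applied at the index $m+1$, which is legitimate since $m+1\le mq=n$.

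The main obstacle, then, is not any single estimate but recognizing this self-referential reduction: the otherwise mysterious factor $1-\tfrac{1}{h(n/q+1)}$ in the definition of $h$ is engineered precisely so that the $q\mid n$ case collapses onto the statement of the lemma at the smaller index $m+1$. Once that is seen, the remaining work is only bookkeeping: verifying that $g$ is positive and nondecreasing (so the cancellation preserves direction and $g(m+1)-g(m)=g(\lceil(m+1)/q\rceil)\ge 0$), that $h(n)>0$ throughout (indeed $h(n)\ge 2$ for $n\ge 2$), and that the ceiling identities $\lceil(n+1)/q\rceil=\lceil n/q\rceil$ for $q\nmid n$ and $\lceil(mq+1)/q\rceil=m+1$ hold as used.
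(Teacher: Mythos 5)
Your proof is correct and follows essentially the same route as the paper's: strong induction on $n$ with the base range $1\le n\le q$ handled by $g(n)=h(n)=n$, the case $q\nmid n$ handled directly, and the case $n=qm$ reduced via $g(m+1)-g(m)=g\big(\lceil (m+1)/q\rceil\big)$ to the induction hypothesis at index $m+1$. The paper writes this last step as a chain of inequalities rather than a reduction, but the content is identical.
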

\begin{proof}
We prove it by induction on $n$. 
We first show that our lemma is true for $1\le n\le q$.
For $1\le n\le q$, we clearly have 
$$
g\Big(\Big\lceil \frac{n}{q}\Big\rceil\Big)=g(1)=1.
$$
From the definition of $g$ and Theorem \ref{thm2}, we gave
$$
g(n+1)=f(qn+q-1)=f(qn)=f(qn-1)+f(n)\ge g(n)+1,
$$
from which we deduce that 
$$
g(n)\ge h(n)=h(n) g\Big(\Big\lceil \frac{n}{q}\Big\rceil\Big)
$$ 
since $h(n+1)=h(n)+1$ and $g(1)=h(1)=1$.

Now, suppose that our lemma is true for all the integers $\le n$, we are going to show the truth of $n+1$ with $n+1\ge q+1$. It can be separate into the following two cases.

{\it Case I.} $q\nmid n$. In this case, by (\ref{definition-g}) and the definition of $h$ we have
\begin{align*}
g(n+1)&=g(n)+g\Big(\Big\lceil \frac{n+1}{q}\Big\rceil\Big)\\
&\ge h(n) g\Big(\Big\lceil \frac{n}{q}\Big\rceil\Big)+g\Big(\Big\lceil \frac{n+1}{q}\Big\rceil\Big)\\
&=h(n) g\Big(\Big\lceil \frac{n+1}{q}\Big\rceil\Big)+g\Big(\Big\lceil \frac{n+1}{q}\Big\rceil\Big)\\
&=\big(h(n)+1\big)g\Big(\Big\lceil \frac{n+1}{q}\Big\rceil\Big)\\
&=h(n+1)g\Big(\Big\lceil \frac{n+1}{q}\Big\rceil\Big),
\end{align*}
proving the truth of $n+1$.

{\it Case II.} $q| n$. Suppose that $n=qm$, then by (\ref{definition-g}) and the definition of $h$  we have
\begin{align*}
g(n+1)&=g(n)+g\Big(\Big\lceil \frac{n+1}{q}\Big\rceil\Big)\\
&= g(qm) +g(m+1)\\
&\ge h(qm)g(m)+g(m+1)\\
&=h(qm)\bigg(g(m+1)-g\Big(\Big\lceil \frac{m+1}{q}\Big\rceil\Big)\bigg)+g(m+1)\\
&=\big( h(qm)+1\big)g(m+1)-h(qm)g\Big(\Big\lceil \frac{m+1}{q}\Big\rceil\Big).
\end{align*}
Note that
$$
g(m+1)\ge h(m+1)g\Big(\Big\lceil \frac{m+1}{q}\Big\rceil\Big)
$$
since $m<n$, from which it follows that
\begin{align*}
g(n+1)&\ge \big( h(qm)+1\big)g(m+1)-h(qm)\frac{g(m+1)}{h(m+1)}\\
&=\Big(h(qm)+1-\frac{h(qm)}{h(m+1)}\Big)g(m+1)\\
&=h(qm+1)g(m+1)\\
&=h(n+1)g\Big(\Big\lceil \frac{n+1}{q}\Big\rceil\Big),
\end{align*}
where the last but one equality follows from the definition of $h(n)$.
\end{proof}

We now provide the proof of Theorem \ref{thm4}.
\begin{proof}[Proof of Theorem \ref{thm4}]
For $q\nmid n+1$, we clearly have $f(n+1)=f(n)$ from Theorem \ref{thm2} and hence $f(n+1)/f(n)=1$. We are left over to consider $q|n+1$.

Suppose that $n+1=qk$, then by the definition of $g(n)$ we have
\begin{align}\label{th4-1}
\lim_{n\rightarrow\infty}\frac{f(n)}{f(n+1)}=\lim_{k\rightarrow\infty}\frac{f(qk-1)}{f(qk)}
=\lim_{k\rightarrow\infty}\frac{g(k)}{g(k+1)}.
\end{align}
Again, from the definition of $g$ we have 
\begin{align}\label{th4-2}
g(k)=g(k+1)-g\Big(\Big\lceil \frac{k+1}{q}\Big\rceil\Big).
\end{align}
Taking (\ref{th4-2}) into (\ref{th4-1}), we obtain
\begin{align}\label{th4-3}
\lim_{n\rightarrow\infty}\frac{f(n)}{f(n+1)}=\lim_{k\rightarrow\infty}\frac{g(k+1)-g\Big(\Big\lceil \frac{k+1}{q}\Big\rceil\Big)}{g(k+1)}=1-\lim_{k\rightarrow\infty}\frac{g\Big(\Big\lceil \frac{k+1}{q}\Big\rceil\Big)}{g(k+1)}.
\end{align}
By Lemmas \ref{lemma4} and \ref{lemma5} we know
\begin{align}\label{th4-4}
g(k+1)\ge h(k+1)g\Big(\Big\lceil \frac{k+1}{q}\Big\rceil\Big)\ge \left\lfloor \frac{\log (k+1)}{\log q}\right\rfloor g\Big(\Big\lceil \frac{k+1}{q}\Big\rceil\Big).
\end{align}
From (\ref{th4-3}) and (\ref{th4-4}) we conclude 
$$
\lim_{n\rightarrow\infty}\frac{f(n)}{f(n+1)}=1,
$$
which is our desired result.
\end{proof}

\section{Remarks}\label{section6}

In \cite{Erdos-Lewin}, Erd\H os and Lewin also considered some related problems which are now known as $d$-complete sequences. A subset $A$ of $\mathbb{N}$ is called $d$-complete if every sufficiently large integer is the sum of distinct terms taken from $A$ such that no one divides the other. For $q>p>1$ with $\gcd(p,q)=1$, Erd\H os and Lewin showed that $\{p^{\alpha}q^{\beta}:\alpha,\beta\in \mathbb{N}\}$ is $d$-complete if and only if $(p,q)=(2,3)$. Answering affirmatively a question of Erd\H os and Lewin, Yu and Chen \cite{Yu2} proved the following parallel result on Lemma \ref{lemma}: Every integer $n\ge 2$ can be represented as a sum of integers all of the form $2^\alpha 3^\beta$, all of which are greater than 
\begin{align}\label{eq-section}
\frac{n}{20(\log n)^{\log 3/\log 2}},
\end{align}
and none of which divides the other.  Very recently, the bound (\ref{eq-section}) is improved to $cn/\log n$ for some absolute constant $c$ by Yang and Zhao \cite{Yang}. For more related literatures on complete sequences, see e.g. \cite{CFH,Chen,Hegyvari3,Hegyvari2}.

Let $g(n)$ be the number of different expressions of $n$ written as a sum of distinct terms taken from $\{2^{\alpha}3^{\beta}:\alpha,\beta\in \mathbb{N}\}$ such that no one divides the other. 
Finding the $\log$-asymptotic formula of $g(n)$, if it exists, is however an unsolved problem. 

\section*{Acknowledgments}
We thank Yong-Gao Chen and Wang-Xing Yu for their interests of this article. 

The author is supported by National Natural Science Foundation of China  (Grant No. 12201544) and China Postdoctoral Science Foundation (Grant No. 2022M710121).

\end{document}